\newtheorem{theorem}{Theorem}
\newtheorem{corollary}[theorem]{Corollary}
\newtheorem{lemma}[theorem]{Lemma}
\newtheorem{remark}[theorem]{Remark}
\newtheorem{fact}[theorem]{Fact}
\theoremstyle{definition}
\newtheorem{definition}[theorem]{Definition}
\newtheorem{example}[theorem]{Example}
\newcommand{\ee}{\varepsilon}
\newtheorem{rem}{Remark}
\newcommand{\EE}{\mathbb{E}}
\newcommand{\PP}{\mathbb{P}}
\newcommand{\RR}{\mathbb{R}}
\newcommand{\Scal}{\mathcal{S}}
\newcommand{\dint}{\mathrm{d}}
\begin{document}

\title{Minimax Rates for High-Dimensional Random Tessellation Forests}
\author{Eliza O'Reilly}
\address{Computing and Mathematical Sciences Department, California Institute of Technology, Pasadena, CA 91107}
\email{eoreilly@caltech.edu}

\author{Ngoc Mai Tran}
\address{Department of Mathematics, University of Texas at Austin, Austin, TX 78712}
\email{ntran@math.utexas.edu}

\date{}

\subjclass[2000]{Primary  60D05; Secondary 62G07}

\maketitle
\begin{abstract}
Random forests are a popular class of algorithms used for regression and classification. The algorithm introduced by Breiman in 2001 and many of its variants are ensembles of randomized decision trees built from {\it axis-aligned} partitions of the feature space. One such variant, called Mondrian forests, was proposed to handle the online setting and is the first class of random forests for which minimax rates were obtained in arbitrary dimension. However, the restriction to axis-aligned splits fails to capture dependencies between features, and random forests that use oblique splits have shown improved empirical performance for many tasks. This work shows that a large class of random forests with general split directions also achieve minimax optimal convergence rates in arbitrary dimension. This class includes STIT forests, a generalization of Mondrian forests to arbitrary split directions, and random forests derived from Poisson hyperplane tessellations. 
These are the first results showing that random forest variants with oblique splits can obtain minimax optimality in arbitrary dimension.
Our proof technique relies on the novel application of the theory of stationary random tessellations in stochastic geometry to statistical learning theory. 
\end{abstract}

\section{Introduction}
Random forests are ensembles of randomized decision trees popularized by Breiman \cite{breiman2001random} and are broadly applicable in classification and regression tasks \cite{fernandez2014we,chen2012random}. Despite their empirical success and widespread use, statistical learning theorems have been notoriously difficult to obtain in dimensions $D \geq 2$ \cite{biau2016random}. There has been significant progress towards understanding the asymptotic behavior of Breiman's original algorithm \cite{ScornetBiauVert2015, WagerAthey2015, WagerAthey2018, MentchHooker2016, ChietalnewRates2022, KlusowskiTian2022}, but much of the theory is still limited by strong assumptions and suboptimal rates. Some of the main difficulties in obtaining theoretical guarantees for this algorithm and its variants result from the complex dependence between the partitioning process generating the tree and the underlying dataset. In response, another line of research considers simplified and stylized versions of random forests. In particular, purely random forests \cite{biau2008consistency,arlot2014analysis} are models built from randomized hierarchical partitions of the input space that are independent of the data and are thus more amenable to theoretical analysis.  
Recently, Mourtada, Gaïffas, and Scornet \cite{mourtada2020minimax} obtained the first minimax optimal rates in arbitrary dimension for a particular class of purely random forests. Specifically, they proved that \emph{Mondrian forests} \cite{lakshminarayanan2014mondrian, lakshminarayanan2016mondrian} attain optimal rates for a properly tuned complexity parameter growing with the amount of data. 

Mondrian forests are purely random forests based on the Mondrian process, a recursive random partition of $\RR^D$ by axis-aligned cuts introduced by Roy and Teh \cite{roy2008mondrian}. This stochastic process enjoys an efficient Markov construction and the following self-consistency property: a sample of a Mondrian process in some domain $W_1 \subset \RR^D$ has the same distribution as sampling a Mondrian process on a larger domain $W_2$ such that $W_1 \subset W_2$ and intersecting with $W_1$. These properties ensure the amenability of Mondrian forests to the online setting \cite{lakshminarayanan2014mondrian,lakshminarayanan2016mondrian,wang2018batched}, where data arrives in a streaming manner, and the estimator is updated over time. This contrasts with Breiman's random forest algorithm and many of its variants which are restricted to the batch setting, where the entire dataset is used at once to build the model. In addition to this practical advantage, the results of \cite{mourtada2020minimax} mentioned above highlight the theoretical advantages of Mondrian forests resulting from its construction.

One key limitation of the Mondrian forest and Breiman's original random forest algorithm comes from the constraint that only one feature of the input is used each time the data within a tree node is divided. While computationally efficient, these axis-aligned splits cannot capture dependencies between features and may produce complex step-wise decision boundaries that lead to high variance and overfitting. In practice, this places a large burden on the feature selection and representation process. To address these concerns, Breiman \cite{breiman2001random} proposed a variant called Forest-RC that increases the expressiveness of the model by allowing splits using linear combinations of features, and it was shown to achieve improved empirical performance over the axis-aligned version. Many other models of random forests using oblique splits have subsequently been proposed \cite{Menzeetal2011, blaser2016random, pmlr-v84-fan18b, TehRTFs2019, rainforth2015canonical}. To mitigate the increased computational cost of using linear combinations of features, recent work \cite{Tomita2020} studied random forests with oblique splits from sparse projections using only a small subset of features. However, oblique splits increase the already difficult task of proving theoretical guarantees for random forest algorithms, and theory justifying the empirical performance of these variants is extremely limited. Existing guarantees restricted to the setting of axis-aligned splits are not easily generalized to oblique splits, illuminating a need for a more flexible theoretical framework. 

To the best of our knowledge, this paper gives the first results on minimax optimality for a large class of purely random forests that are defined for all dimensions and allow for {\it general} splits using linear combinations of features. In particular, we show that STIT forests, a significant generalization of Mondrian forests, also attain the minimax optimal rates proved in \cite{mourtada2020minimax}. STIT forests are derived from the stable under iteration (STIT) processes introduced by Nagel and Weiss \cite{Nagel2003, Nagel2005}, and these stochastic processes all enjoy the self-consistency and online construction that underpin the popularity of the Mondrian forest in practice. The family of STIT processes is indexed by probability distributions on the unit sphere describing the distribution of directions of the hyperplane cuts in the random partition, and the Mondrian process corresponds to a STIT process where this directional distribution is the discrete uniform measure on the coordinate vectors. Subsequent generalizations of the Mondrian process to oblique cuts \cite{pmlr-v84-fan18b,TehRTFs2019} are also special cases of STIT processes. The freedom in the choice of the directional distribution for the splits brings greater flexibility in building machine learning models. For example, while the Mondrian process can only be used to approximate the Laplace kernel \cite{lakshminarayanan2016mondrian}, STIT processes produce random features that approximate a much broader class of kernels \cite{OReillyTran2021}. Improved empirical performance of STIT forests built from STIT processes with a uniform directional distribution over Mondrian forests was also shown in \cite{TehRTFs2019} through a classification task and simulation study. Additionally, a STIT process in $\RR^D$ with discrete directional distribution having $N \geq D$ support vectors can be simulated by lifting to a subspace of $\RR^N$ and running a Mondrian process \cite{OReillyTran2021}. This observation can mitigate computational costs of the oblique splits and gives an interpretation of a STIT forest as implicitly generating a Mondrian forest in a higher dimensional feature space.

A significant contribution of our work lies in the proof technique, as our approach relies on theorems in stochastic geometry that have not previously been utilized in statistical learning theory. In extending the theory for the Mondrian to STIT forests, a fundamental difficulty is the geometry of the cells of the partitions. The Mondrian process generates axis-aligned rectangular cells, and the distribution of the cell a given input is contained in can be characterized precisely from the construction. In contrast, STIT processes divide the input space into more general and complex convex polytopes. The theory of random tessellations in stochastic geometry provides a flexible and robust theoretical framework that enables us to handle these more general cell geometries. In particular, we crucially exploit the self-consistency property and the stationarity of the corresponding STIT tessellation on $\RR^D$ to obtain risk bounds for STIT forest estimators that depend on the distribution of a single random polytope, called the {\it typical cell} of the random tessellation. 

Additionally, our proof technique allows us to incorporate an assumption of intrinsic low-dimensionality on the input data, improving convergence rates in high-dimensional feature space. A well-known challenge in developing statistical learning guarantees for nonparametric regression is the curse of dimensionality, where, for instance, one needs $O(\ee^{-D})$ number of samples to estimate general Lipschitz functions on $\RR^D$ with $\ee$ accuracy. Indeed, the minimax optimal rates for Mondrian forests in \cite{mourtada2020minimax} depend on the ambient dimension of the input data and become very slow in the presence of high-dimensional feature space, even though empirically random forests perform well in such regimes \cite{chen2012random,fernandez2014we,Tomita2020}. One approach to justifying such performance is to make additional structural assumptions on the input data source to attain improved convergence rates. 
For STIT forests, the theory of stationary hyperplane processses in stochastic geometry provides insight yielding optimal rates that also adapt to a notion of intrinsic dimensionality of the input.

Specifically, our first main result (see Theorem \ref{t:rate1}) gives an upper bound on the quadratic risk of a STIT forest regression estimator of a $\beta$-H\"{o}lder continuous function for $\beta \in (0,1]$. The theorem implies that \emph{any} STIT forest with optimally tuned complexity parameter achieves the minimax rate for this function class, and the choice of the directional distribution of the splits appears in the constant terms of the upper bound. Our proof method gives geometric interpretations to these constants in terms of moments of the diameter of the cell of the associated STIT tessellation containing the origin and the expected mixed volumes between the support of the input and the typical cell. This precise geometry enables us to  obtain rates in terms of the intrinsic dimension of the input defined by the dimension of the subspace of $\RR^D$ on which the input data is supported. In the particular case of the Mondrian forest, the typical cell is the Minkowski sum of i.i.d. centered line segments parallel to the axes with exponential length. Taking the support of the input to be $[0,1]^D$ recovers Theorem 2 in \cite{mourtada2020minimax}, see Example \ref{ex:Nlambda_ex3} for details. Our second main result (see Theorem \ref{t:rate2}) significantly generalizes Theorem 3 in \cite{mourtada2020minimax}, where additional smoothness assumptions are made on $f$. We show that any STIT forest estimator achieves the minimax rate for the class of $(1 + \beta)$-H\"{o}lder functions for $\beta \in (0,1]$ for both a large enough number of trees in the forest and an optimally tuned complexity parameter. As was the case for Mondrian forests, an improved rate for STIT forests over STIT trees is due to large enough forests having a smaller bias than single trees for smooth regression functions.

Finally, our proof technique also takes us \emph{beyond} the class of STIT forests. Any random partition of the input space can be used to define a random tree estimator and subsequently a random forest estimator. Since the upper bounds we obtain on the convergence rates are explicitly derived in terms of geometric properties of the typical cell of the random partition, it can readily be applied to any random forest obtained from a stationary random tessellation of $\RR^D$. Our last main result (see Theorem \ref{t:poisson}) demonstrates this principle. It states that a random forest derived from a Poisson hyperplane process achieves identical convergence rates as a STIT forest with the same directional distribution and complexity parameter, and thus is also minimax optimal. 

\subsection*{Organization} Section \ref{sec:background} collects background on STIT processes, Poisson hyperplane processes, and essential results in stochastic geometry needed for our proofs. Section \ref{sec:cells} presents key lemmas on distributional characteristics of the cells of STIT and Poisson hyperplane tessellations. Section \ref{s:main} then states our three main results, Theorems \ref{t:rate1},  \ref{t:rate2}, and \ref{t:poisson}, and in \ref{sec:classification}, risk bounds in the setting of binary classification are obtained as corollaries. Section \ref{sec:proofs} provides the proofs of these results.
Section \ref{s:discussion} concludes with discussions and open problems. 
\subsection*{Acknowledgements} Ngoc Mai Tran is supported by NSF Grant DMS-2113468 and the NSF IFML 2019844 award to the University of Texas at Austin. Eliza O'Reilly is supported by NSF MSPRF Award 2002255 with additional funding from ONR Award N00014-18-1-2363. 

\section{Preliminaries}\label{sec:background}

We recall here the key concepts from stochastic geometry needed for our paper, including \emph{random tessellations, stationarity}, the \emph{zero cell}, and the \emph{typical cell}. We recommend the book by \cite[Chapter~10]{weil} for additional background.  

A tessellation is a locally finite random partition of $\RR^D$ into compact and convex polytopes. It can be viewed as the collection of polytopes, or cells, of the tessellation or as the union of their boundaries. In this paper, we view tessellations as the collection of cells, but we will also discuss the union of cell boundaries to establish relevant definitions. Formally, we define a \emph{random tessellation} as a point process of cells $\mathcal{P} = \{C_i\}_{i \in \mathbb{Z}}$ taking values in the space $\mathcal{K}$ of non-empty compact and convex polytopes that satisfy the following:
\begin{itemize}
    \item for all compact $K \subset \RR^D$, a finite number of $C_i$'s have non-empty intersection with $K$;
    \item for all $i \neq j$, $\mathrm{int}(C_i) \cap \mathrm{int}(C_j) = \emptyset$;
    \item $\underset{i \in \mathbb{Z}}{\bigcup} C_i = \RR^D$.
\end{itemize}
A random tessellation $\mathcal{P} = \{C_i\}_{i \in \mathbb{Z}}$ is \emph{stationary} if its distribution is invariant under translations. That is, for all $x \in \RR^D$, $\{C_i + x\}_{i \in \mathbb{Z}} \stackrel{d}{=} \{C_i\}_{i\in \mathbb{Z}}$, where `$\stackrel{d}{=}$' denotes equality in distribution. A consequence of the definition of a random tessellation is that for all $i$, $\mathrm{vol}_D(\partial C_i) = 0$. This fact along with stationarity implies that every $x \in \RR^D$ almost surely belongs to a unique cell of the tessellation, which will be denoted $Z_x$. The zero cell of $\mathcal{P}$, denoted by $Z_0$, is defined as the unique cell of the tessellation containing the origin. Stationary also implies that $Z_0 \stackrel{d}{=} Z_x - x$.

An important random object related to a stationary random tessellation $\mathcal{P}$ is the {\it typical cell}. To define this, consider first a center function $c: \mathcal{K} \to \RR^D$ such that $c(K + x) = c(K) + x$ for all $x \in \RR^D$. Examples include the centroid or the center of the smallest ball containing $K$. We can then decompose $\mathcal{P}$ into a stationary marked point process $\{\left(c(C_j), C_j - c(C_j)\right)\}_{j \in \mathbb{Z}}$ consisting of a ground point process in $\RR^D$ of cell centers and elements from $\mathcal{K}_0 := \{ K \in \mathcal{K} : c(K) = 0\}$ attached to each center. Following \cite[Section~4.1]{weil}, there exists a random polytope $Z$ in $\mathcal{K}_0$ such that 
for any non-negative measurable function $f$ on $\mathcal{K}$,
\begin{align}\label{e:campbell}
    \EE\left[\sum_{C \in \mathcal{P}} f(C)\right] = \frac{1}{\EE[\mathrm{vol}_D(Z)]}\EE\left[\int_{\RR^D} f(Z + y) \dint y\right].
\end{align}
The random polytope $Z$ is called the \emph{typical cell} of $\mathcal{P}$.
Its distribution can be understood as the limiting distribution of a cell chosen uniformly at random from a large ball, centered at the origin using the center function $c$, as the radius of the ball grows to infinity. The relation \eqref{e:campbell} also implies that the distribution of the centered zero cell $Z_0 - c(Z_0)$ has the same distribution as the volume weighted typical cell, i.e.
\begin{align*}
    \EE[f(Z_0 - c(Z_0))] 
    &= \frac{1}{\EE[\mathrm{vol}_D(Z)]}\EE\left[\mathrm{vol}_D(Z) f(Z)\right].
\end{align*}

For a random tessellation $\mathcal{P}$ in $\RR^D$, we will denote by $\mathcal{Y}$ the union of cell boundaries, which forms a $d-1$-surface process in $\RR^D$ \cite[Section 4.5]{weil}. For the stationary surface process $\mathcal{Y}$ corresponding to a stationary random tessellation, we can define a directional distribution $\phi$ on $\mathbb{S}^{D-1}$ characterizing the `rose of directions' for the $D-1$-dimensional facets generating the cell boundaries. We will say $\mathcal{P}$ has directional distribution $\phi$ if $\mathcal{Y}$ has directional distribution $\phi$.

\subsection{STIT Tessellations}

For two random tessellations $\mathcal{P}_1$ and $\mathcal{P}_2$, denote the union of their cell boundaries by $\mathcal{Y}_1$ and $\mathcal{Y}_2$, respectively. Associate to each cell $c$ in $\mathcal{P}_1$ an independent copy $\mathcal{Y}_2(c)$ of $\mathcal{Y}_2$ and assume the family $\{\mathcal{Y}_2(c) : c \in \mathcal{P}_1\}$ is independent of $\mathcal{P}_1$. Then, the {\it iteration} of $\mathcal{Y}_1$ and $\mathcal{Y}_2$ is defined as
\begin{align*}
    \mathcal{Y}_1 \boxplus \mathcal{Y}_2 := \mathcal{Y}_1 \cup \bigcup_{c \in \mathcal{P}_1} \left(\mathcal{Y}_2(c) \cap c\right).
\end{align*}
That is, each cell $c$ of the frame tessellation $\mathcal{P}_1$ is subdivided by the cells of $\mathcal{Y}_2(c) \cap c$.
A random tessellation $\mathcal{P}$ is called \textit{stable under iteration}, or STIT, if for the union of cell boundaries $\mathcal{Y}$, for all $n \in \mathbb{N}$,
\begin{align}\label{e:STIT}
 \mathcal{Y} \stackrel{d}{=} n (\underbrace{\mathcal{Y} \boxplus \cdots \boxplus \mathcal{Y}}_{n \text{ times}}),   
\end{align}
where $n\mathcal{Y} := \{nx : x \in \mathcal{Y}\}$ is the dilation of $\mathcal{Y}$ by the factor $n$. 

A STIT process is a stochastic process $\{\mathcal{Y}(\lambda): \lambda > 0\}$ of random tessellation cell boundaries in $\RR^D$ with the following properties:
\begin{itemize}
    \item[(i)] Stationarity: $\mathcal{Y}(\lambda) + x \stackrel{d}{=} \mathcal{Y}(\lambda)$ for all $x \in \RR^D$;
    \item[(ii)] Markov Property: $\mathcal{Y}(\lambda_1 + \lambda_2) \stackrel{d}{=} \mathcal{Y}(\lambda_1) \boxplus \mathcal{Y}(\lambda_2)$ for all $\lambda_1,\lambda_2 > 0$;
    \item[(iii)] STIT: for all $\lambda > 0$ and $n \in \mathbb{N}$, \eqref{e:STIT} holds for $\mathcal{Y}(\lambda)$. 
\end{itemize}

We will call the parameter $\lambda > 0$ the \emph{lifetime} of the process. A consequence of property (iii) is that STIT processes have the following scaling property: for all $\lambda > 0$, $\mathcal{Y}(1) \stackrel{d}{=} \lambda \mathcal{Y}(\lambda)$ \cite[Lemma 5]{Nagel2005}. Intuitively, this property says that one can swap time for space. If we fix a compact observation window $W \subset \RR^D$, then $\{\mathcal{Y}(\lambda) \cap W : \lambda > 0\}$ is a stochastic process of random tessellation cell boundaries in $W$, which we think of as a visualization of $\mathcal{Y}(\lambda)$ through the window $W$. Now, one can fix $\lambda$ and `zoom out' on $\mathcal{Y}(\lambda)$ by mapping $\mathcal{Y}(\lambda) \cap W \mapsto \frac{1}{2}\mathcal{Y}(\lambda) \cap W$, or one can run the STIT process for twice as long by mapping $\mathcal{Y}(\lambda) \cap W \mapsto \mathcal{Y}(2\lambda) \cap W$. The scaling property says these two operations give the same random tessellation in $W$ in distribution.  

For a lifetime $\lambda > 0$, let $\mathcal{P}(\lambda)$ denote the STIT tessellation of $\RR^D$ with cell boundaries given by a STIT process $\mathcal{Y}(\lambda)$. Denote the zero cell of $\mathcal{P}(\lambda)$ by $Z_0^{\lambda}$ and the typical cell by $Z_{\lambda}$. The scaling property implies the following important facts that we will use in the remainder of the paper: for all $\lambda > 0$,
\begin{align}\label{e:zerocell_scaled}
Z_0 \stackrel{d}{=} \lambda Z_0^{\lambda} \quad \text{ and } \quad  Z \stackrel{d}{=} \lambda Z_\lambda,
    \end{align}
where $Z_0 := Z_0^1$ and $Z := Z_1$ are the zero cell and typical cell of $\mathcal{P}(1)$.

While seemingly abstract, it was proved in \cite{Nagel2005} that the local STIT process $\{\mathcal{Y}(\lambda) \cap W: \lambda > 0\}$ restricted to a fixed compact and convex window $W \subset \RR^D$ can be simulated through a Markov process that generates a hierarchical partition of $W$ over time. A special case of this construction was rediscovered in \cite{roy2008mondrian}, which led to the Mondrian process.  

Formally, let $\phi$ be an even probability measure on the unit sphere $\mathbb{S}^{D-1}$ with support containing $d$ linearly independent directions. Then define $\Lambda$ to be the stationary and locally finite measure on the space of hyperplanes in $\RR^{D}$, denoted $\mathcal{H}^{D}$, such that
\[\Lambda(A) := \int_{\RR} \int_{\mathbb{S}^{D-1}} 1_{\{H(u,t) \in A\}} \phi(\dint u)\dint t , \quad A \in \mathcal{B}(\mathcal{H}^{D}),\]
where $H(u,t) := \{x \in \RR^D : \langle x, u \rangle = t\}$. The space $\mathcal{H}^{D}$ of hyperplanes is equipped with the hit-miss topology, which contains compact subsets of the following form: for compact $W \subset \RR^D$, 
\[[W] := \{H \in \mathcal{H}^D : H \cap W \neq \emptyset\}.\]
Now, fix a lifetime $\lambda > 0$ and consider the following procedure to construct a random partition $\mathcal{Y}(\lambda,W,\phi)$ of $W$.
\begin{enumerate}
    \item Draw $\delta \sim \mathrm{Exp}(\Lambda([W])$, where
\begin{align*}
 \Lambda([W]) &=  \int_{\RR} \int_{\mathbb{S}^{D-1}} 1_{\{H(u,t) \cap W \neq \emptyset\}} \dint \phi(u) \dint t 
= \int_{\mathbb{S}^{D-1}} \left(h(W,u) + h(W,-u)\right) \dint \phi(u),   
\end{align*}
and $h(W, u) := \sup_{x \in W} \langle u,x \rangle$ is the support function of $W$. 
\item If $\delta > \lambda$, stop. Else, at time $\delta$, generate a random hyperplane $H(U,T)$ where the direction $U$ is drawn from the distribution
\[\dint \Phi(u) := \frac{h(W,u) + h(W,-u)}{\Lambda([W])}\dint\phi(u), \quad u \in \mathbb{S}^{D-1},\]
and conditioned on $U$, $T$ is drawn uniformly on the interval from $-h(W,-U)$ to $h(W,U)$. 
Split the window $W$ into two cells $W_1$ and $W_2$ with $H(U,T)$.
\item Repeat steps (1) and (2) in each sub-window $W_1$ and $W_2$ independently with new lifetime parameter $\lambda - \delta$ until the lifetime expires.
\end{enumerate}

In Figure \ref{fig:example}, we show samples of the partition generated from a STIT process in the centered unit square $[-0.5,0.5]^2$ using the above procedure over increasing lifetime $\lambda$. In this example, the directional distribution is uniform over the three directions $e_1:= (1,0)$, $e_2:= (0,1)$, $u := (1/\sqrt{2}, 1/\sqrt{2})$, and their reflections over the origin.

\begin{figure}[h!]
    \centering
\begin{minipage}{.24\linewidth}
  \includegraphics[width=\linewidth]{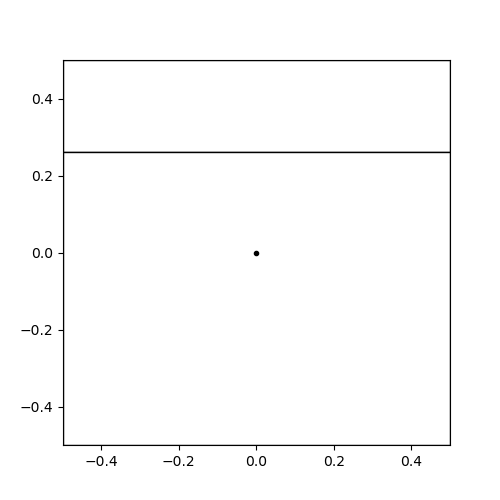}
\end{minipage}
\begin{minipage}{.24\linewidth}
  \includegraphics[width=\linewidth]{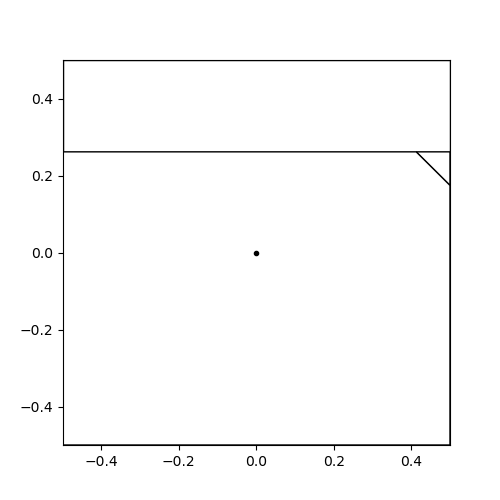}
\end{minipage}
\begin{minipage}{.24\linewidth}
  \includegraphics[width=\linewidth]{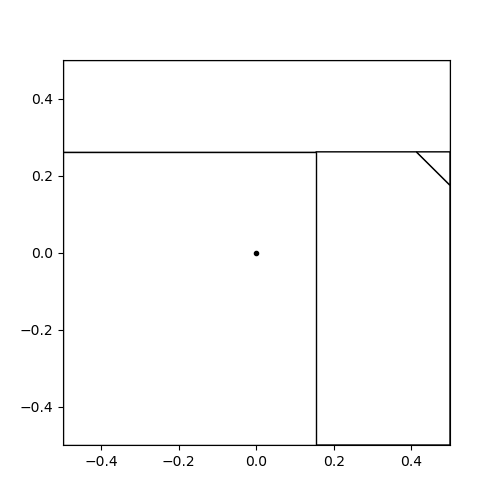}
\end{minipage}
\begin{minipage}{.24\linewidth}
  \includegraphics[width=\linewidth]{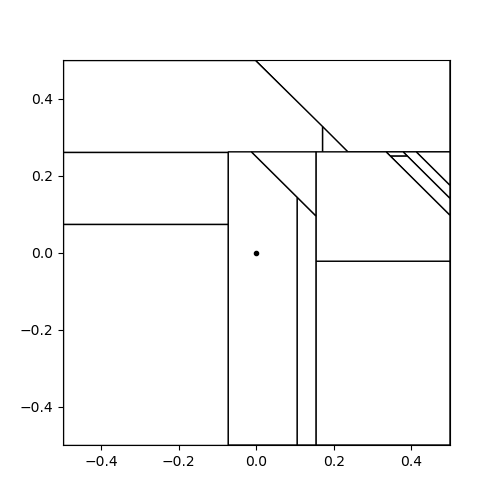}
\end{minipage}
    \caption{An example STIT process with three directions. When the process within a cell $W$ begins, an independent exponential clock with mean $\Lambda([W])$ is started. When the clock rings, the cell is cut by a random hyperplane conditioned to hit this cell as in step $(2)$ above. In this simulation, we start with the unit square $W = [-0.5,0.5]^2$ and run the process until the lifetime $\lambda = 9$. The first three figures show the resulting partition after the first three cuts are made and the last figure shows the final STIT tessellation at time $\lambda = 9$.}
    \label{fig:example}
\end{figure}

Theorem 1 in \cite{Nagel2005} shows the existence of a STIT tessellation process $\mathcal{Y}(\lambda)$ on $\RR^D$ such that $\mathcal{Y}(\lambda) \cap W \stackrel{d}{=} \mathcal{Y}(\lambda, W,\phi)$. Conversely, for any stationary random tessellation in $\RR^D$ with cell boundaries $\mathcal{Y}$ satisfying \eqref{e:STIT} with directional distribution $\phi$, Corollary 2 in \cite{Nagel2005} shows that there exists $\lambda > 0$ such that 
$\mathcal{Y} \cap W \stackrel{d}{=} \mathcal{Y}(\lambda,W,\phi)$ for all compact $W \subset \RR^D$. 
Together, these results imply that the class of STIT tessellations is the most general class of stationary random tessellations with the hierarchical construction that underpins the computational benefits of the Mondrian process \cite{roy2008mondrian, lakshminarayanan2014mondrian, lakshminarayanan2016mondrian}.

\begin{example}[The Mondrian process as a special case of STIT processes]\label{ex:mondrian}
If $\phi$ is the uniform distribution over the positive and negative standard basis vectors, i.e. 
\begin{align}\label{e:mondrian_phi}
\phi = \frac{1}{2D} \sum_{i=1}^D (\delta_{e_i} + \delta_{-e_i}),    
\end{align}
then the resulting STIT process $\mathcal{Y}(\lambda,W,\phi)$ is the Mondrian process \cite{roy2008mondrian}. See Figure \ref{fig:four.partitions}(B) for a simulation.
\end{example}

\begin{example}[Isotropic STIT process]\label{ex:isotropic}
If $\phi$ is the uniform distribution over $\mathbb{S}^{D-1}$, then the distribution of the corresponding STIT tessellation is invariant with respect to rotations about the origin. This model is called the isotropic STIT process. See Figure \ref{fig:four.partitions}(A) for a simulation. 
\end{example}

\begin{center}
\begin{figure}
\centering
  \begin{subfigure}[t]{.4\textwidth}
    \centering
\includegraphics[width=\textwidth]{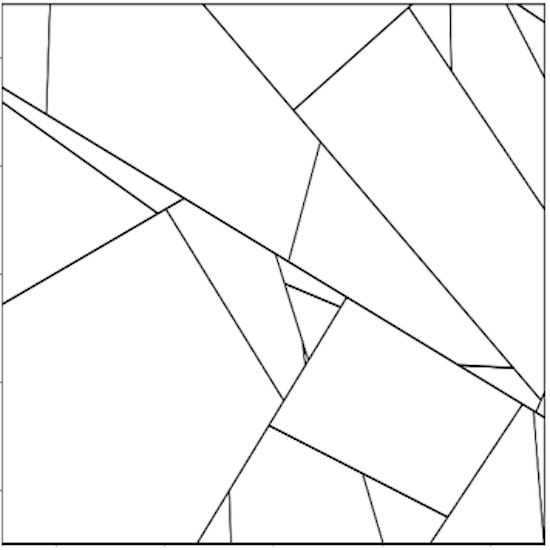}
    \caption{Isotropic STIT process}
  \end{subfigure}
  \qquad \hspace{.1in}
  \begin{subfigure}[t]{.4\textwidth}
    \centering
\includegraphics[width=\textwidth]{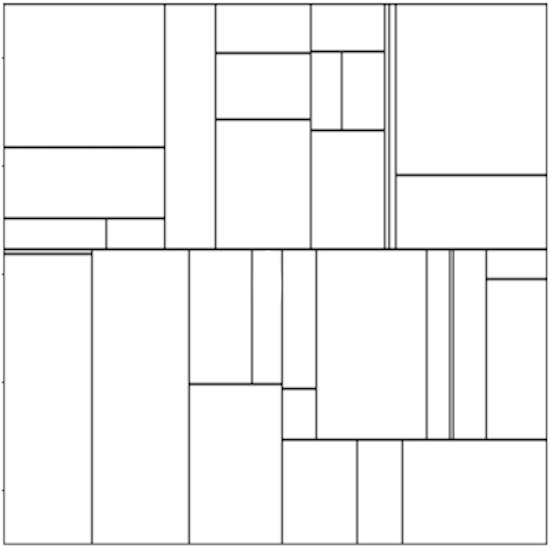}
    \caption{Axis-aligned STIT (aka Mondrian) process}
  \end{subfigure}  
  \medskip \vspace{.1in}
  \begin{subfigure}[t]{.4\textwidth}
  \centering
\includegraphics[width=\textwidth]{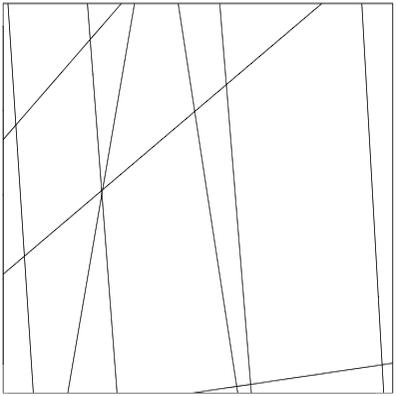}
    \caption{Isotropic Poisson hyperplane process}
  \end{subfigure}
   \qquad \hspace{.1in}
  \begin{subfigure}[t]{.4\textwidth}
    \centering
\includegraphics[width=\textwidth]{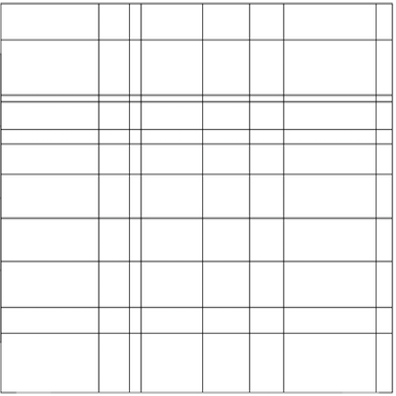}
    \caption{Axis-aligned (aka Manhattan) Poisson hyperplane process}
  \end{subfigure}
  \caption{A simulation of STIT processes (top) vs. Poisson hyperplane processes (bottom) up to time $\lambda = 10$, with $\phi$ being the continuous uniform measure on the unit circle in (A) and (C), and the discrete uniform measure on the standard coordinate vectors in (B) and (D). Though the pair (A,C) (respectively (B,D)) are globally different tessellations, it was observed in \cite{Nagel2005} (see also Corollary 1 in \cite{Thale2013Poisson}) that the typical cell $Z$ of these two tessellations have identical distribution. This fact is key to the proof of Theorem \ref{t:poisson}, which says that the random forests based on (A) and (C) (respectively (B) and (D)) achieve the same minimax rates.  }\label{fig:four.partitions}
\end{figure}
\end{center}

\subsection{Poisson Hyperplane Tessellations}
We now define another class of stationary random tessellations of $\RR^D$, and describe its relationship to the class of STIT tessellations. A \emph{stationary Poisson hyperplane process} $X$ is a stationary Poisson point process on the space of affine hyperplanes $\mathcal{H}^D$ in $\RR^D$ with first moment measure
\[\Theta(\cdot)  := \EE[X(\cdot)] = \lambda\int_{\RR} \int_{\mathbb{S}^{D-1}} 1_{\{H(u,t) \in \cdot\}} \dint \phi(u) \dint t,\]
for some constant $\lambda > 0$ called the \emph{intensity}, and an even probability measure $\phi$ on $\mathbb{S}^{D-1}$ called the spherical directional distribution \cite[Chapter 4.4]{weil}. The following procedure generates a sample from $X$ on a compact window $W$:
\begin{enumerate}
    \item Sample $N \sim \mathrm{Poisson}(\Theta([W]))$, where
    \[\Theta([W]) = \lambda \int_{\RR}\int_{\mathbb{S}^{D-1}} 1_{\{H(u,t) \cap W \neq \emptyset\}} \dint \phi(u) \dint t = \lambda \int_{\mathbb{S}^{D-1}} \left(h(W,u) + h(W,-u)\right) \dint \phi(u).\]
    \item Conditioned on $N = n$, generate $n$ i.i.d. random hyperplanes $\{H(U_i, T_i)\}_{i=1}^n$, where for each $i$, $U_i$ has probability distribution
    \[\dint \Phi(u) := \frac{\lambda(h(W,u) + h(W,-u))}{\Theta([W])}\dint\phi(u),\quad u \in \mathbb{S}^{D-1},\]
    and conditioned on $U_i$, 
    $T_i$ is uniform in the interval from $-h(W, -U_i)$ to $h(W, U_i)$.
\end{enumerate}

Poisson hyperplane processes induce random tessellations on $\RR^D$ called Poisson hyperplane tessellations that are globally different than STIT tessellations (cf. Figure \ref{fig:four.partitions}). In particular, a Poisson hyperplane tessellation is face-to-face, meaning that the intersection of two cells is either empty, or is a face of both cells. This is not the case for a STIT tessellation. For example, a vertex of a cell in a STIT tessellation can be an interior point of the facet of a neighbor cell. However, the typical cell of a STIT tessellation with lifetime parameter $\lambda$ and directional distribution $\phi$ has the same distribution as the typical cell of a stationary Poisson hyperplane tessellation with intensity $\lambda$ and the same directional distribution \cite[Corollary 1]{Thale2013Poisson}. By Theorem 10.4.1 in \cite{weil}, the typical cell determines the distribution of the zero cell, and so the zero cell of a STIT tessellation and a stationary Poisson hyperplane tessellation with corresponding parameters are also equal in distribution.

\section{Parameters of STIT and Poisson Hyperplane Tessellation Cells}\label{sec:cells}

In this section, we generalize the results in Section 4 of \cite{mourtada2020minimax} on the diameter of the zero cell and the number of cells hitting a compact and convex domain. These observations show that STIT processes and stationary Poisson hyperplane processes produce partitions on a domain of unit volume that contain $O(\lambda^d)$ cells of diameter $O(1/\lambda)$ which is on the order of the $1/\lambda$-covering number for such a domain. 
Both bounds depend on an important parameter called the \textit{associated zonoid} \cite[p.156]{weil}. If the random tessellation has directional distribution $\phi$ and lifetime/intensity $\lambda$, this is defined as the convex body $\Pi_{\lambda}$ in $\mathbb{R}^D$ with support function
\begin{align}\label{eq:hZ}
h(\Pi_{\lambda},v) =\frac{\lambda}{2}\Lambda([[0,v]]) = \frac{\lambda}{2}\int_{\mathbb{S}^{D-1}} |\langle u,v \rangle| \dint \phi(u), \quad v \in \mathbb{S}^{D-1}.
\end{align}
We will denote by $\Pi$ the associated zonoid of the process for lifetime/intensity $\lambda = 1$ and refer to this parameter that depends only on the directional distribution as the \emph{normalized associated zonoid} of the random tessellation. In particular, note that $\Pi_{\lambda} = \lambda \Pi$. 
We also recall \cite[(10.4) and (10.44)]{weil} that
\begin{align}\label{e:EVZ}
\EE[\mathrm{vol}_D(Z)] = \frac{1}{\mathrm{vol}_D(\Pi)}.
\end{align}

For the Mondrian (see Example \ref{ex:mondrian}) or axis-aligned Poisson hyperplane process, we can combine \eqref{e:mondrian_phi} and \eqref{eq:hZ} to obtain the support function of $\Pi$:
\begin{align*}
 h(\Pi, v) = \frac{1}{D} \sum_{i=1}^D |\langle e_i, v\rangle| = \frac{1}{D}\|v\|_1, \quad u \in \RR^D.  
\end{align*} 
Thus the associated zonoid is the $\ell^{\infty}$ ball $\Pi = \{x \in \RR^D : \|x\|_{\infty} \leq \frac{1}{D}\}$. 

For the isotropic STIT (see Example \ref{ex:isotropic}) or isotropic Poisson hyperplane process, we let $\phi$ in \eqref{eq:hZ} be the uniform distribution $\sigma$ on the unit sphere to obtain
$$h(\Pi,v) = \frac{1}{2}\int_{\mathbb{S}^{D-1}} |\langle u,v \rangle| \dint \sigma(u) =  \frac{\|v\|_2}{2}\int_{\mathbb{S}^{D-1}} |u_n| \dint \sigma(u) = c_D \|v\|_2,$$ 
where $c_D:= \frac{\Gamma\left(\frac{D}{2}\right)}{2\sqrt{\pi}\Gamma\left(\frac{D+1}{2}\right)}$. Thus the associated zonoid $\Pi$ is an $\ell^2$ ball centered at the origin with radius $c_D$. 

The associated zonoid can also be used to understand the distribution of the intersection of a STIT or Poisson hyperplane tessellation in $\RR^d$ with a linear subspace $\mathcal{S}$. We will repeatedly use the following important fact in the remainder of the paper. 
\begin{fact}{\cite[(4.61)]{weil}}\label{f:intersection}
Let $\mathcal{P}$ be a STIT (or stationary Poisson hyperplane) tessellation in $\RR^D$ with associated zonoid $\Pi$ and let $\mathcal{S}$ be a linear subspace of $\RR^D$. The intersection $\mathcal{P} \cap \mathcal{S}$ is a STIT (or stationary Poisson hyperplane) tessellation in $\mathcal{S}$ with associated zonoid given by the orthogonal projection $P_S\Pi$ of $\Pi$ onto the subspace $\mathcal{S}$.
\end{fact}

In the following, let $\mathcal{P}(\lambda)$ denote a STIT tessellation in $\RR^D$ with lifetime $\lambda \in (0, \infty)$ and normalized associated zonoid $\Pi$.

\subsection{Diameter of zero cell}

The precise distribution of the diameter of the zero cell of $\mathcal{P}(\lambda)$ with a general directional distribution remains an open question in stochastic geometry. However, we will provide an upper bound on the moments that is sufficient for proving this paper's results.

\begin{lemma}\label{l:diameter}
Let $Z^{\lambda}_0$ denote the zero cell of $\mathcal{P}(\lambda)$ in $\RR^D$ and let $\mathcal{S}$ be a linear subspace of $\RR^D$.
Then, for all $k > 0$, there exists a constant $c := c(k, \Pi)$ depending only on $k$ and $\Pi$ such that
\begin{align*}
  \EE[\mathrm{diam}(Z^{\lambda}_0 \cap \mathcal{S})^k] \leq c \lambda^{-k}h_{\min}(P_{\mathcal{S}}\Pi)^{-k},
\end{align*}
where $h_{\min}(P_{\Scal}\Pi):= \min_{u \in \mathbb{S}^{D-1} \cap \mathcal{S}} h(P_{\Scal}\Pi, u)$.
\end{lemma}
\begin{proof}
We first note that by Fact \ref{f:intersection} and \eqref{e:zerocell_scaled}, the random polytope $Z_0^{\lambda} \cap \mathcal{S}$ has the same distribution as $\lambda^{-1}Z_{0, \mathcal{S}}$, where $Z_{0,\mathcal{S}}$ is the zero cell of the STIT tessellation in $\mathcal{S}$ with associated zonoid $P_{\mathcal{S}}\Pi$. \\
Next, it follows from Section 8 of \cite{HugSchneider2007} that for fixed $r > 0$ and $\tau \in (0,1)$, there exists a constant $c_{r} = c_{r}(\tau, \Pi)$ such that for all $a > r$,
\begin{align}\label{e:diamtail}
    \PP(\mathrm{diam}(Z_{0,\mathcal{S}}) \geq a) \leq c_{r} e^{-\tau a h_{\min}(P_{\mathcal{S}}\Pi)}.
\end{align}
The moments of the diameter then satisfy
\begin{align*}
  \EE[\mathrm{diam}(Z_{0, \mathcal{S}})^k] &= \int_{0}^{\infty} kt^{k-1}\PP(\mathrm{diam}(Z_{0,\mathcal{S}})\geq t) \dint t \\
  &\leq \int_{0}^{r} kt^{k-1} \dint t + k c_{r} \int_{r}^{\infty} t^{k-1} e^{-\tau t h_{\min}(P_{\mathcal{S}}\Pi)} \dint t \\
  &= r^k + \frac{k c_r }{(\tau h_{\min}(P_{\mathcal{S}}\Pi))^{k}} \int_{0}^{\infty} y^{k-1}e^{-y} \dint y \leq r^k + \frac{c_{r} \Gamma(k+1)}{\tau^kh_{\min}(P_{\mathcal{S}}\Pi)^k}.  
\end{align*}
Letting $\tau = 2^{-1/k}$ and $r = \frac{(\Gamma(k+1))^{1/k}}{\tau h_{\min}(P_{\Scal}\Pi)}$ gives \[\EE[\mathrm{diam}(Z_{0, \Scal})^k] \leq \frac{2(1 + c_{r}) \Gamma(k+1)}{h_{\min}(P_{\Scal}\Pi)^k},\] 
and by \eqref{e:zerocell_scaled}, 
\begin{align*}
    \EE[\mathrm{diam}(Z_0^{\lambda})^k] = \frac{1}{\lambda^k} \EE[\mathrm{diam}(Z_{0, \Scal})^k]\leq \frac{c(k,\Pi)}{\lambda^kh_{\min}(P_{\Scal}\Pi)^k},
\end{align*}
where $c(k,\Pi) := 2(1 + c_{r}) \Gamma(k+1)$ for the chosen $r= r(k, \Pi)$.
\end{proof}

\begin{remark}
For the isotropic model as in Example \ref{ex:isotropic}, the normalized associated zonoid is the ball $c_DB^D$, and $h_{\min}(\Pi) = c_D$. In fact, $h_{\min}(\Pi)$ is maximal in the isotropic case because for any other $\phi$ and $\Pi$ defined by \eqref{eq:hZ}, there will be a direction $v$ for which $h(\Pi, v) \leq \int_{\mathbb{S}^{D-1}} h(\Pi, u) \dint \sigma_{D-1}(u) = c_D$, where $\sigma_{D-1}$ is the uniform distribution on $\mathbb{S}^{D-1}$. Thus, the exponential rate of the tail bound \eqref{e:diamtail} is maximal for the isotropic STIT tessellation.
\end{remark}

\subsection{Number of cells in a compact domain}

The following upper bound on the number of cells of $\mathcal{P}(\lambda)$ that intersect a compact and convex subset of $\RR^D$ follows from equation \eqref{e:campbell}.

\begin{lemma}\label{l:number}
Let $K$ be a compact and convex set contained in a $d$-dimensional subspace $\mathcal{S}$ of $\RR^D$.  
Let $N_{\lambda}(K)$ be the number of cells of $\mathcal{P}(\lambda)$ that intersect $K$. Then,
\begin{align*}
   \EE[N_{\lambda}(K)] & = \mathrm{vol}_d(P_{\mathcal{S}}\Pi)\sum_{k=0}^{d}  \binom{d}{k} \lambda^k \EE[V(K[k], Z_{\mathcal{S}}[d-k])],
\end{align*}
where $Z_{\mathcal{S}}$ is the typical cell of the STIT tessellation in $\Scal$ with associated zonoid $P_{\Scal}\Pi$ and $\EE[V(K[k], Z[r-k])]:= \EE[V(\underbrace{K,\ldots,K}_{k}, \underbrace{Z, \ldots, Z}_{r-k})]$.
\end{lemma}

The \emph{mixed volume} $V(K_1, \ldots, K_d)$ of a collection of convex bodies $K_1, \ldots, K_d$ is non-negative, translation-invariant, multilinear, and symmetric in its arguments \cite[Section 5.1]{SchneiderBook}.
For $k \in \mathbb{N}$, let $B^k$ denote the unit ball in $\RR^k$, and define $\kappa_{k} := \mathrm{vol}_k(B^k)$. The \emph{intrinsic volumes} of a convex body $K \subset \RR^d$ are defined for $j=0, \ldots, d$ by
\[V_j(K) := \frac{\binom{d}{j}}{\kappa_{d-j}}V(K[j], B^d[d-j]).\]
The case $j=d$ is the usual volume, i.e. $V_d = \mathrm{vol}_d$. If $K$ has dimension $j < d$, the normalization ensures $V_j(K)$ is the usual $j$-dimensional volume $\mathrm{vol}_j(K)$.

\begin{example}\label{ex:Nlambda_ex1}
If $K = RB^d$, a ball of radius $R$ in $\RR^d$, then
\begin{align*}
\EE[N_{\lambda}(RB^d)] &= \mathrm{vol}_d(\Pi)\sum_{k=0}^d  \binom{d}{k} \lambda^k R^k \EE[V(B^d[k], Z[d-k])] \\  
&= \mathrm{vol}_d(\Pi)\sum_{k=0}^d \lambda^k R^k \kappa_k  \EE[V_{d-k}(Z)].  
\end{align*}
By (10.3) and Theorem 10.3.3 in \cite{weil}, $\EE V_{d-k}(Z) =  \frac{V_{k}(\Pi)}{\mathrm{vol}_d(\Pi)}$. Thus,
\begin{align}\label{e:Nlambda_Ball}
   \EE[N_{\lambda}(RB^d)] = \sum_{k=0}^d (\lambda R)^k \kappa_k  V_k(\Pi).
\end{align}
\end{example}
\begin{example}\label{ex:Nlambda_ex2}
For the isotropic STIT (see Example \ref{ex:isotropic}) and any convex body $K \subset \RR^D$, Proposition 3 in \cite{Thale2011} gives
\begin{align*}
    \EE[N_{\lambda}(K)] = \sum_{k=0}^D \left(\prod_{j=1}^k \gamma_j\right)\frac{\lambda^k}{k!}V_k(K),
\end{align*}
where the constant $\gamma_j$ is defined as $\gamma_j := \frac{\Gamma(\frac{j+1}{2})\Gamma(\frac{D}{2})}{\Gamma(\frac{j}{2})\Gamma(\frac{D+1}{2})}$.
\end{example}
\begin{example}\label{ex:Nlambda_ex3}
Suppose $K = [0,1]^D$ and $Z$ is the typical cell of a STIT tessellation with directional distribution $\phi = \frac{1}{2D}\sum_{i=1}^D(\delta_{e_i} + \delta_{-e_i})$ and lifetime parameter $D$. This setting corresponds with the Mondrian process in \cite{mourtada2020minimax}. 
Then, $h(Z, u) = \frac{T_i}{2}\sum_{i=1}^D |\langle u, e_i\rangle|$, where $T_1, \ldots T_d$ are i.i.d. exponential random variables with unit mean. By the formula for mixed volumes of zonoids from \cite[p.~614]{weil},
\[V(K[k], Z[d-k]) \stackrel{d}{=} \prod_{i=1}^{D} T_i,\]
and $\EE[V(K[k], Z[d-k])] = 1$.
Lemma \ref{l:number} then implies
\[\EE[N_{\lambda}([0,1]^D)] = \sum_{k=0}^D  \binom{d}{k} \lambda^k = (1 + \lambda)^D,\]
which recovers Proposition 2 in \cite{mourtada2020minimax}.
\end{example}

\begin{proof}(of Lemma \ref{l:number})
Let $Z_{\lambda}$ denote the typical cell of $\mathcal{P}(\lambda)$.
We first note that by Fact \ref{f:intersection} and \eqref{e:zerocell_scaled}, the random polytope $Z_{\lambda} \cap \mathcal{S}$ has the same distribution as $\lambda^{-1}Z_{\mathcal{S}}$, where $Z_{\Scal}$ is as defined in the Lemma.
Then, applying \eqref{e:campbell} with the indicator function $f(\cdot ) = 1_{\{\cdot \cap K \neq \emptyset\}}$ and \eqref{e:EVZ} gives that the expected number of cells of $\mathcal{P}(\lambda)$ intersecting $K \subset \mathcal{S}$ satisfies
\begin{align*}
   \EE[N_{\lambda}(K)] &= \EE\left[\sum_{C \in \mathcal{P}(\lambda)} 1_{\{C \cap K \neq \emptyset\}}\right] = \EE\left[\sum_{C \in \mathcal{P}(\lambda) \cap \mathcal{S}} 1_{\{C \cap K \neq \emptyset\}}\right]\\
   &= \frac{\lambda^d}{\EE[\mathrm{vol}_d(Z_{\Scal})]}\EE\left[\int_{\Scal} 1_{\{\lambda^{-1}Z_{\Scal} + y \cap K \neq \emptyset\}} \dint y \right] \\
   &= \lambda^d \mathrm{vol}_d(P_{\Scal}\Pi)\EE[\mathrm{vol}_d(K - \lambda^{-1}Z_{\Scal})] \\
   &=  \lambda^d\mathrm{vol}_d(P_{\Scal}\Pi) \sum_{k=0}^d \binom{d}{k}\EE[V(K[k], -\lambda^{-1}Z_{\Scal}[d-k])].
\end{align*}
The last equality follows from the relation (5.16) in \cite{weil}. The third equality follows from the fact that $\lambda^{-1}Z_{\Scal} + y \cap K \neq \emptyset$ if and only if $y \in K - \lambda^{-1}Z_{\Scal}$. 
By the scaling property of mixed volumes and the fact that $Z \stackrel{d}{=} -Z$,
\[\EE[V(K[k], -\lambda^{-1}Z_{\Scal}[d-k])] = \lambda^{-(d-k)}\EE[V(K[k], Z_{\Scal}[d-k])].\]
Thus,
\begin{align*}
  \EE[N_{\lambda}(K)] &=  \mathrm{vol}_d(P_{\Scal}\Pi)\sum_{k=0}^d  \binom{d}{k} \lambda^k \EE[V(K[k], Z_{\Scal}[d-k])].
\end{align*}
\end{proof}


\section{Main results}\label{s:main}

Fix a non-empty compact and convex $D$-dimensional domain $W \subset \RR^D$, and consider the following regression setting. The data set $\mathcal{D}_n := \{(X_1, Y_1), \ldots, (X_n,Y_n)\}$ consists of $n$ i.i.d. samples from a random pair $(X,Y) \in W \times \RR$ such that $\EE[Y^2] < \infty$.  Let $\mu$ denote the unknown distribution of $X$ and 
\[Y = f(X) + \ee,\]
where $f(X) = \EE[Y|X]$ is the conditional expectation of $Y$ given $X$ and $\ee$ is noise such that $\EE[\ee|X] = 0$ and $\mathrm{Var}(\ee|X) = \sigma^2 < \infty$ almost surely.

Let $\mathcal{P}$ be a random tessellation of $W$. The regression tree estimator based on $\mathcal{P}$ is
\begin{align}\label{e:tree}
    \hat{f}_n(x, \mathcal{P}) := \sum_{i=1}^n \frac{1_{\{X_i \in Z_x\}}}{\mathcal{N}_n(x)}Y_i,
\end{align}
where $Z_x$ is the cell of $\mathcal{P}$ that contains $x$ and $\mathcal{N}_n(x) := \sum_{i=1}^n 1_{\{X_i \in Z_x\}}$ is the number of points in $Z_x$. If $\mathcal{N}_n(x) = 0$, then it is assumed that $\hat{f}_n(x, \mathcal{P}) = 0$. The random forest estimator based on $\mathcal{P}$ is defined by averaging $M$ i.i.d. copies of the tree estimator, i.e.
\begin{align}\label{e:forest}
    \hat{f}_{n,M}(x) := \frac{1}{M} \sum_{m=1}^M \hat{f}_n(x, \mathcal{P}_m),
\end{align}
where $\mathcal{P}_1, \ldots, \mathcal{P}_M$ are $M$ i.i.d. copies of $\mathcal{P}$. 

We define the STIT regression tree estimator $\hat{f}_{\lambda,n}$ and the STIT regression forest estimator $\hat{f}_{\lambda,n, M}$ as in \eqref{e:tree} and \eqref{e:forest} respectively, where $\mathcal{P} := \mathcal{P}(\lambda) \cap W$ is the random tessellation on $W$ generated by a STIT tessellation $\mathcal{P}(\lambda)$ with lifetime parameter $\lambda$ and normalized associated zonoid $\Pi$.
The quality of the estimator $\hat{f}_{\lambda,n,M}$ is measured by the quadratic risk
\[R(\hat{f}_{\lambda,n,M}) := \EE[(\hat{f}_{\lambda,n, M}(X) - f(X))^2].\]

We now define the function classes we will consider in our results. For $k \in \mathbb{N}$, $\beta \in (0,1]$, and $L > 0$, define the $(k + \beta)$-H\"{o}lder ball of norm $L$, denoted by $\mathcal{C}^{k, \beta}(L) = \mathcal{C}^{k, \beta}(W, L)$, to be the set of all $k$ times differentiable functions $f: W \to \RR$ such that for all multi-indices $\alpha$ with $|\alpha| \leq k$,
\[\|D^{\alpha}f(x) - D^{\alpha}f(y)\| \leq L\|x - y\|^{\beta} \text{  and  } \|D^{\alpha}f(x)\| \leq L, \]
for all $x,y \in W$. 
The minimax rate for the class $\mathcal{C}^{k, \beta}(L)$ is $n^{-2(k + \beta)/(2(k + \beta) + D)}$ \cite[Theorem 3.2]{Gyorfi}. 

Our main results show that for an appropriate choice of $\lambda$, STIT forest estimators achieve the minimax rate of convergence for $\mathcal{C}^{0, \beta}(L)$ and $\mathcal{C}^{1, \beta}(L)$. The rates are also adaptive to the intrinsic dimension of the input data, defined as follows.

\begin{definition}
The input $X$ has intrinsic dimension $d$ if the support of its distribution $\mu$ is contained in an $d$-dimensional linear subspace $\mathcal{S}$ of $\RR^D$.
\end{definition}

We now state our first main result.

\begin{theorem}\label{t:rate1}
 Assume $X$ has intrinsic dimension $d$ and $f \in \mathcal{C}^{0, \beta}(L)$ for $\beta \in (0,1]$ and $L > 0$.
Then, 
\begin{align}\label{e:risk_bound}
R(\hat{f}_{\lambda,n,M}) &\leq  \frac{L^2c_{\beta, P_{\Scal}\Pi}}{\lambda^{2\beta}h_{\min}(P_{\Scal}\Pi)^{2\beta}}  + \frac{(5\|f\|^2_{\infty} + 2\sigma^2)\mathrm{vol}_d(P_{\mathcal{S}}\Pi)}{n}\sum_{k=0}^{d}  \binom{d}{k} \lambda^k \EE\left[V\left(W_{\Scal}[k], Z_{\Scal}[d-k]\right)\right], 
\end{align}
where $W_{\Scal} := W \cap \Scal$ and $Z_{\Scal}$ is the typical cell of the STIT tessellation in $\Scal$ with associated zonoid $P_{\Scal}\Pi$.
If $\mathrm{diam}(W) \leq 2R$, then
\begin{align}\label{e:risk_bound_B}
R(\hat{f}_{\lambda,n,M})&\leq \frac{L^2c_{\beta, P_{\Scal}\Pi}}{\lambda^{2\beta}h_{\min}(P_{\Scal}\Pi)^{2\beta}} + \frac{(5\|f\|^2_{\infty} + 2\sigma^2)}{n}\sum_{k=0}^{d}  \lambda^k R^k \kappa_k V_k\left(P_{\Scal}\Pi\right). 
\end{align}
\end{theorem}

\begin{corollary}\label{cor:rate1}
In the setting of Theorem \ref{t:rate1}, letting $\lambda_n \sim L^{2/(d + 2 \beta)}n^{1/(d + 2\beta)}$ as $n \to \infty$ yields
\begin{align}\label{e:rate1}
R(\hat{f}_{\lambda_n,n,M})= O\left( L^{2d/(d + 2\beta)}n^{-2\beta/(d+2\beta)}\right),
\end{align}
which is the minimax rate for the class $\mathcal{C}^{0, \beta}(L)$ on $\RR^d$.
\end{corollary}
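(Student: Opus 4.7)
The corollary will follow directly from Theorem~\ref{t:rate1} by plugging in the prescribed $\lambda_n = L^{2/(d+2\beta)} n^{1/(d+2\beta)}$ and verifying that this choice balances the bias and variance contributions in the right-hand bound. No new probabilistic input is needed; the entire argument is an algebraic optimization in the single parameter $\lambda$.

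The plan is to handle the two summands separately. For the bias term $L c_{\beta,\Pi}/\lambda_n^{2\beta}$, I would substitute $\lambda_n$ directly; computing $\lambda_n^{2\beta} = L^{4\beta/(d+2\beta)} n^{2\beta/(d+2\beta)}$ and simplifying yields a contribution of order $L^{(d-2\beta)/(d+2\beta)} n^{-2\beta/(d+2\beta)}$. For the variance term, I would first note that the polynomial
\[
P(\lambda) := \mathrm{vol}_d(\Pi)\sum_{k=0}^d \binom{d}{k} \lambda^k \EE[V(W[k], Z[d-k])]
\]
appearing in Theorem~\ref{t:rate1} has degree $d$ in $\lambda$, with finite, nonnegative coefficients: the top coefficient equals $\mathrm{vol}_d(\Pi)\,\mathrm{vol}_d(W)$ (since $V(W[d], Z[0]) = \mathrm{vol}_d(W)$), and the lower-order expected mixed volumes are finite because Lemma~\ref{l:diameter} gives finite moments of $\mathrm{diam}(Z)$ (which upper bounds each expected mixed volume via monotonicity of mixed volumes applied to $Z \subseteq \mathrm{diam}(Z)\,B^d$). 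Hence, for $\lambda \geq 1$ one has $P(\lambda) \leq c_{W,\Pi}\,\lambda^d$ for a constant depending only on $W$, $\phi$ and $d$, so the variance term is bounded by a constant multiple of $\lambda_n^d/n$, which evaluates to $L^{2d/(d+2\beta)} n^{-2\beta/(d+2\beta)}$.

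Finally, I would combine the two bounds. Since the $L$-exponent $2d/(d+2\beta)$ of the variance dominates the $L$-exponent $(d-2\beta)/(d+2\beta)$ of the bias, the total is of order $L^{2d/(d+2\beta)} n^{-2\beta/(d+2\beta)}$, matching the claimed rate and the minimax lower bound for $\mathcal{C}^{0,\beta}(L)$ from \cite[Theorem 3.2]{Gyorfi}. I do not expect any step to pose a real difficulty; the substantive work is already encoded in Theorem~\ref{t:rate1} and Lemma~\ref{l:number}, and the only bookkeeping is to absorb the $\phi$-, $W$- and $d$-dependent geometric constants into the big-$O$.
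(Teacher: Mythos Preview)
Your proposal is correct and takes essentially the same approach as the paper: the paper's argument is the single line at the end of the proof of Theorem~\ref{t:rate1}, which simply notes that the variance bound is $O(\lambda^d/n)$ (already stated in Lemma~\ref{l:number}) and substitutes $\lambda_n$. Your version just makes the algebra and the finiteness of the mixed-volume coefficients explicit; note also that your computed exponent $L^{2d/(d+2\beta)}$ is the correct one (the $2+2\beta$ in the displayed statement is a typo, and the bias constant in Theorem~\ref{t:rate1} should be $L^2$ rather than $L$, after which bias and variance carry the same $L$-exponent).
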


The minimax rate above holds even for STIT tree estimators. To see the advantage of averaging multiple trees in a random forest estimator, the following result assumes additional smoothness on the function $f$. In this case, the minimax rate is only achieved when the forest size $M$ is large enough.

\begin{theorem}\label{t:rate2}
Assume $X$ has intrinsic dimension $d$ and the distribution $\mu$ of $X$ has a positive and Lipschitz density with respect to the Lebesgue measure on its $d$-dimensional convex support $K$. Let $r(K)$ denote the inradius of $K$ and define $K_{\ee} := \{x \in K : d(x, \partial K) \geq \ee\}$. Assume $f \in \mathcal{C}^{1, \beta}(L)$ for $\beta \in (0,1]$ and $L > 0$.  Then, for $\ee \in (0, r(K))$,
\begin{align*}
\EE[(\hat{f}_{\lambda,n, M}(X) - f(X))^2 | X \in K_{\ee}] \leq O\left( \frac{L^2}{\lambda^2M} + \frac{L^2}{\lambda^{2\beta + 2}} + \frac{\lambda^d}{n}\right).
\end{align*}
In the unconditional case when $\ee = 0$,
\begin{align*}
\EE[(\hat{f}_{\lambda,n, M}(X) - f(X))^2] \leq O\left(\frac{L^2}{\lambda^2 M} + \frac{L^2}{\lambda^{\min\{3, 2\beta + 2)}\}} + \frac{\lambda^d}{n}\right).
\end{align*}.
\end{theorem}

\begin{corollary}\label{cor:rate2}
In the setting of Theorem \ref{t:rate2}, choosing
\begin{align*}
    \lambda_n \sim L^{2/(d + 2\beta + 2)}n^{1/(d + 2\beta + 2)} \quad \text{ and } \quad M_n \gtrsim L^{4\beta/(d + 2\beta + 2)}n^{2\beta/(d + 2\beta + 2)}
\end{align*}
as $n \to \infty$ implies
\begin{align}\label{e:rate2}
\EE[(\hat{f}_{\lambda_n,n, M}(X) - f(X))^2 | X \in K_{\ee}] =  O\left(L^{2d/(d + 2\beta + 2)}n^{-(2\beta + 2)/(d + 2\beta + 2)}\right),
\end{align}
which is the minimax rate for the class $\mathcal{C}^{1, \beta}(L)$ on $\RR^d$.
In the unconditional case when $\ee = 0$, the rate \eqref{e:rate2} is obtained when $\beta \leq 1/2$. When $\beta > 1/2$, choosing
\begin{align*}
    \lambda_n \sim L^{2/(d + 3)}n^{1/(d+3)} \quad \text{ and } M_n \gtrsim L^{2/(d + 3)}n^{1/(d+3)} 
\end{align*}
as $n \to \infty$ implies
\begin{align*}
\EE[(\hat{f}_{\lambda_n,n, M}(X) - f(X))^2]  = O\left(L^{2d/(d+3)}n^{-3/(d+3)}\right).
\end{align*}
\end{corollary}

\begin{remark}\label{Rem:adaptivity}
Specialized to the case of the Mondrian, our rates are an improvement over the results of \cite{mourtada2020minimax}, where no notion of the low dimensionality of the input was considered. However, note that the rates are obtained through optimal choices of $\lambda$ and $M$ that depend on $d$ and $\beta$. In practice, the intrinsic dimension of the input and the regularity of $f$ are not known \emph{a priori}, and it is an open problem to find an adaptive way of choosing the lifetime parameter $\lambda$ such that the random forest estimator achieves these optimal rates without this prior knowledge.
\end{remark}

The upper bounds on the risk in Theorem \ref{t:rate1} and \ref{t:rate2} only rely on statistics of the typical cell and the zero cell of the STIT tessellation $\mathcal{P}(\lambda)$. Since these values are identical for a STIT and a stationary Poisson hyperplane tessellation with matching parameters, it follows that regression estimators based on stationary Poisson hyperplane processes have \emph{identical} risk bounds. We state this formally as follows. 
\begin{theorem}\label{t:poisson}
Let $\hat{f}_{\lambda, n, M}$ be the random forest estimator defined as in 
\eqref{e:forest} using $M$ i.i.d. random tessellations induced by a stationary Poisson hyperplane process with intensity $\lambda$ and normalized associated zonoid $\Pi$. Then, the minimax optimal convergence rates \eqref{e:rate1} and \eqref{e:rate2} hold for the risk of $\hat{f}_{\lambda_n,n, M}$ in each corresponding setting.
\end{theorem}

\begin{remark}
While the rates in Corollaries \ref{cor:rate1} and \ref{cor:rate2} depend only on the intrinsic dimension of $X$, the ambient dimension $D$ appears in the upper bounds of Theorems \ref{t:rate1} and \ref{t:rate2} through the constants. To clarify the dependence on $D$, we insert $\lambda = L^{1/(d+2\beta)}n^{1/(d + 2\beta)}$ into \eqref{e:risk_bound_B} to obtain
\begin{align*}
R(\hat{f}_{\lambda,n,M})&\leq \left(\frac{c_{\beta, P_{\Scal}\Pi}}{h_{\min}(P_{\Scal}\Pi)^{2\beta}} + (5\|f\|^2_{\infty} + 2\sigma^2) R^d \kappa_d \mathrm{vol}_d\left(P_{\Scal}\Pi\right) \right)L^{\frac{d}{d+2\beta}}n^{\frac{-2\beta}{d + 2\beta}} + o\left(n^{\frac{-2\beta}{d + 2\beta}}\right).
\end{align*}
Since the leading order constant involves geometric properties of $P_{\Scal}\Pi$, the order with respect to $D$ will depend on the subspace $\mathcal{S}$ and the shape of $\Pi$. We first note that the constant $\mathrm{vol}_d(P_{\mathcal{S}}\Pi)$ has a uniform upper bound that does not depend on $D$. Indeed, since $h(\Pi,u) \leq 1$ for all $u \in \mathbb{S}^{D-1}$, 
$\mathrm{vol}_d(P_{\mathcal{S}}\Pi) \leq \kappa_d$ for any subspace $\Scal$ of dimension $d$. 
However, the constant $h_{\min}(P_{\Scal}\Pi)^{-2\beta}$ could grow faster with $D$ if the subspace $\mathcal{S}$ lies in ``bad" directions with respect to $\Pi$. This observation highlights a potential downside of using axis-aligned splits. Indeed, for the Mondrian (see Example \ref{ex:mondrian}), the associated zonoid is $\Pi = \frac{1}{D}[-1,1]^D$ and if $\Scal = \text{span}(\mathbf{1})$, then $h_{\min}(P_{\Scal}\Pi)^{-2\beta} = O(D^{\beta})$. However, if $\mathcal{S} = \mathrm{span}(e_1)$, then $h_{\min}(P_{\Scal}\Pi)^{-2\beta} = O(D^{2\beta})$. Without further knowledge of the subspace one can instead choose an isotropic STIT (see Example \ref{ex:isotropic}), where the associated zonoid is $\Pi = c_DB^D$ and $h_{\min}(P_{\Scal}\Pi)^{-2\beta} = O(D^{\beta})$ for any subspace $\Scal$. Unfortunately due to the unknown dependence of the constant $c_{\beta, P_{\Scal}\Pi}$ on $D$ we cannot currently make this analysis more precise.
We leave for future work a more thorough study of the statistical advantages of oblique splits, see Section \ref{s:discussion}. 
\end{remark}

\subsection{Rates for binary classification}\label{sec:classification}

We next show that we can extend all of the above results to binary classification as was done for Mondrian random forests in Section 5.5 of \cite{mourtada2020minimax}. In this setting, we assume the data are i.i.d. samples from a random pair $(X,Y) \in W \times \{0,1\}$. We then define the function $\eta(x) := \PP(Y = 1 | X = x)$ and the optimal classifier $g(x) := 1_{\{\eta(x) \geq 1/2\}}$. The STIT (or Poisson hyperplane) forest classifier $\hat{g}_{\lambda, n, M}$ is defined by
\[\hat{g}_{\lambda, n, M}(x) := 1_{\{\hat{\eta}_{\lambda, n, M}(x) \geq 1/2\}}, \qquad x \in W,\]
where $\hat{\eta}_{\lambda, n, M}(x)$ is the STIT (or Poisson hyperplane) forest estimator of $\eta$ as defined in the previous section. The risk of $\hat{g}_{\lambda, n , M}$ is given by the classification error
\[\mathcal{L}(\hat{g}_{\lambda, n , M}) := \PP(\hat{g}_{\lambda, n , M}(X) \neq Y).\]
To evaluate the classification estimator, we compare $\mathcal{L}(\hat{g}_{\lambda, n , M})$ to the Bayes risk
$\mathcal{L}(g) := \PP(g(X) \neq Y)$ and consider the difference
\[R(\hat{g}_{\lambda, n , M}) := \mathcal{L}(\hat{g}_{\lambda, n , M}) - \mathcal{L}(g).\]
A general theorem \cite[Theorem 6.5]{LugosiBook} shows that $R(\hat{g}_{\lambda, n , M})$ is controlled by the square root of the risk of the regression estimator $\hat{\eta}_{\lambda, n, M}$ and implies the following Corollary of Theorem \ref{t:rate1}.
\begin{corollary}\label{cor:class1}
Assume $X$ has intrinsic dimension $d$ and $\eta \in \mathcal{C}^{0, 1}(L)$ for $L > 0$.
Then, letting $\lambda_n \sim n^{1/(d + 2\beta)}$ as $n \to \infty$ yields
\begin{align}
R(\hat{g}_{\lambda_n,n,M})= o\left(n^{-\beta/(d+2\beta)}\right).
\end{align}
\end{corollary}
We similarly obtain the following Corollary of Theorem \ref{t:rate2} showing that we can obtain a faster rate with forest estimators than with single trees.
\begin{corollary}\label{cor:class2}
Assume $X$ has intrinsic dimension $d$ and the distribution $\mu$ of $X$ has a positive and Lipschitz density with respect to Lebesgue measure on its $d$-dimensional convex support $K$. Let $r(K)$ denote the inradius of $K$ and define $K_{\ee} := \{x \in K : d(x, \partial K) \geq \ee\}$. Assume $\eta \in \mathcal{C}^{1, \beta}(L)$ for $\beta \in (0,1]$ and $L > 0$.  Then, for $\ee \in (0, r(K))$, choosing
\begin{align*}
    \lambda_n \sim n^{1/(d + 2\beta + 2)} \quad \text{ and } \quad M_n \gtrsim n^{2\beta/(d + 2\beta + 2)}
\end{align*}
as $n \to \infty$ implies
\begin{align}
\PP\left(\hat{g}_{\lambda_n, n, M_n}(X) \neq Y | X \in K_{\ee}\right) - \PP\left(g(X) \neq Y | X \in K_{\ee}\right) =  o\left(n^{-(\beta + 1)/(d + 2\beta + 2)}\right).
\end{align}
\end{corollary}

\section{Proofs}\label{sec:proofs}

The above results follow from the following bias-variance decomposition of the risk of a tree estimator presented by \cite{arlot2014analysis}. A subtle difference between their setting and ours is that they view the partition as a finite partitioning of $[0,1]^d$, and here we consider the partition to be a stationary STIT tessellation on $\RR^d$ which we view through the compact and convex window $W$ that contains the support of $\mu$. First, let $Z_x^{\lambda}$ denote the cell of $\mathcal{P}(\lambda)$ that contains the vector $x \in \RR^d$, and define
\[\bar{f}_{\lambda}(x) := \EE_X[f(X) | X \in Z^{\lambda}_x], \quad x \in W.\]
Conditioned on $\mathcal{P}(\lambda)$, this is the orthogonal projection of $f \in L^2(W, \mu)$ onto the subspace of functions that are constant within the cells of $\mathcal{P}(\lambda) \cap W$.

Conditioned additionally on the data $\mathcal{D}_n$, the random tree estimator $\hat{f}_{\lambda,n}$ is in this subspace of piecewise functions, and hence $\EE_X[(f(X) - \bar{f}_{\lambda}(X))\hat{f}_{\lambda,n}(X)] = 0$. Thus, given $\mathcal{P}(\lambda)$ and $\mathcal{D}_n$,
\begin{align*}
    \EE_X[(f(X) - \hat{f}_{\lambda,n}(X))^2] &= \EE_X[(f(X) - \bar{f}_{\lambda}(X) + \bar{f}_{\lambda}(X) - \hat{f}_{\lambda,n}(X))^2]  \\
   & = \EE_X[(f(X) - \bar{f}_{\lambda}(X))^2] + \EE_X[(\bar{f}_{\lambda}(X) - \hat{f}_{\lambda,n}(X))^2].
\end{align*}
Taking the expectation over $\mathcal{P}(\lambda)$ and $\mathcal{D}_n$ gives the following decomposition of the risk:
\begin{align}\label{e:tree_decomp}
    R(\hat{f}_{\lambda,n}) :=  \EE[(f(X) - \hat{f}_{\lambda,n}(X))^2] = \EE[(f(X) - \bar{f}_{\lambda}(X))^2] + \EE[(\bar{f}_{\lambda}(X) - \hat{f}_{\lambda,n}(X))^2].
\end{align}
The first term measures how far away $f$ is from the closest function in the hypothesis class that the estimators lie in and is called the approximation error or bias. The second term measures the estimation error, or variance, coming from the fact that we build the estimator from only a finite number of samples. As in the results of \cite{mourtada2020minimax}, the bias and variance depend on the geometric properties of the cells of the tessellations from which the estimator is built. In particular, the bias is controlled by moments of the diameter of the zero cell, and the variance is controlled by the expected number of cells that have a non-empty intersection with the support of $\mu$. Lemmas \ref{l:diameter} and \ref{l:number} provide the needed bounds, and choosing an optimal lifetime $\lambda$ depending on the number of samples and Lipschitz constant gives the results.



\subsection{Variance Bound}
In the following, we see that the variance term can by controlled by the expected number of cells of the tessellation that intersect the support of $\mu$.

\begin{lemma}\label{l:variance}
Let $N_{\lambda}(K)$ be the number of cells of $\mathcal{P}(\lambda)$ that have non-empty intersection with a bounded subset $K \subset \RR^D$. Then,
\begin{align*}
    \EE\left[(\bar{f}_{\lambda}(X) - \hat{f}_{\lambda,n}(X))^2\right] \leq \frac{5\|f\|_{\infty}^2 + 2\sigma^2}{n} \EE[N_{\lambda}(\mathrm{supp}(\mu))].
\end{align*}
\end{lemma}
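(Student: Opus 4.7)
The plan is to condition on the partition $\mathcal{P}(\lambda)$ and reduce the estimate to a cell-by-cell analysis. For each cell $C$ of $\mathcal{P}(\lambda)$ meeting $W$, set $p_C := \mu(C)$ and $N_C := \sum_{i=1}^n \mathbf{1}\{X_i \in C\}$, so that conditional on $\mathcal{P}(\lambda)$ the count $N_C$ is $\mathrm{Binomial}(n, p_C)$. Both $\bar{f}_\lambda$ and $\hat{f}_{\lambda,n}$ are constant on $C$, with respective values $\bar{f}_C := \EE[f(X) \mid X \in C]$ and $\hat{f}_C := N_C^{-1}\sum_{i:\, X_i \in C} Y_i$ (or $\hat{f}_C = 0$ if $N_C = 0$). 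Integrating over $X$ with respect to $\mu$, and using $\mathrm{supp}(\mu) \subseteq W$,
\[
\EE\bigl[(\bar{f}_\lambda(X) - \hat{f}_{\lambda,n}(X))^2 \,\big|\, \mathcal{P}(\lambda)\bigr] \;=\; \sum_{C \cap W \neq \emptyset} p_C\, \EE\bigl[(\hat{f}_C - \bar{f}_C)^2 \,\big|\, \mathcal{P}(\lambda)\bigr].
\]

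I would then bound each cellwise term by conditioning further on $N_C$. On the event $\{N_C \ge 1\}$, the samples $\{Y_i : X_i \in C\}$ are i.i.d.\ with mean $\bar{f}_C$ and variance at most $\|f\|_\infty^2 + \sigma^2$ (since $\mathrm{Var}(f(X) \mid X \in C) \le \|f\|_\infty^2$ and $\mathrm{Var}(\ee \mid X) = \sigma^2$), so the conditional variance of $\hat{f}_C$ given $N_C$ is at most $(\|f\|_\infty^2 + \sigma^2)/N_C$. On $\{N_C = 0\}$ the cellwise error equals $\bar{f}_C^2 \le \|f\|_\infty^2$. Combining the two,
\[
\EE\bigl[(\hat{f}_C - \bar{f}_C)^2 \,\big|\, \mathcal{P}(\lambda)\bigr] \;\le\; (\|f\|_\infty^2 + \sigma^2)\,\EE\!\left[\frac{\mathbf{1}\{N_C \ge 1\}}{N_C}\,\Big|\,\mathcal{P}(\lambda)\right] + \|f\|_\infty^2 (1-p_C)^n.
\]

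To finish I would invoke two classical binomial inequalities: $\EE[\mathbf{1}\{N_C \ge 1\}/N_C] \le 2/((n+1)p_C)$ and $\max_{p \in [0,1]} p(1-p)^n \le 1/(n+1)$. Multiplying the previous display by $p_C$ and applying these, every cell contributes $O(1/n)$ to the weighted sum, so summation over cells meeting $W$ gives a bound of the form $(\alpha \|f\|_\infty^2 + 2\sigma^2) N_\lambda(W)/n$; the coefficient $\alpha = 5$ stated in the lemma is loose enough to absorb the numerical constants coming from these two inequalities. Taking the outer expectation over $\mathcal{P}(\lambda)$ replaces $N_\lambda(W)$ by $\EE[N_\lambda(W)]$ and yields the lemma.

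The main obstacle is the $\{N_C = 0\}$ contribution, where the estimator defaults to the crude value $0$. A naive bound gives only $\|f\|_\infty^2 (1-p_C)^n$ per cell, which after weighting by $p_C$ and summing could blow up if many cells carried tiny mass. The sharp inequality $p_C (1-p_C)^n \le 1/(n+1)$ balances the probability of emptiness against the cell mass, forcing every cell to contribute $O(1/n)$ and ensuring that the variance is ultimately controlled by the \emph{number} of cells meeting $W$ rather than by their individual masses---which is precisely why the variance bound is then readily combined with Lemma \ref{l:number}.
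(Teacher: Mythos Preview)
Your argument is correct and follows essentially the same route as the paper: condition on the partition, decompose cell by cell with weight $p_C$, split according to $\{N_C\ge1\}$ versus $\{N_C=0\}$, and then apply the binomial bounds $\EE[\mathbf{1}\{N_C\ge1\}/N_C]\le 2/((n+1)p_C)$ and $p_C(1-p_C)^n\le 1/(n+1)$ (the paper uses the equivalent $np_C(1-p_C)^n\le e^{-1}\le 1$). Your handling of the conditional variance $\mathrm{Var}(Y\mid X\in C)\le \|f\|_\infty^2+\sigma^2$ is in fact slightly tighter than the paper's, yielding $3\|f\|_\infty^2+2\sigma^2$ rather than $5\|f\|_\infty^2+2\sigma^2$, so the stated constant is comfortably absorbed.
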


The proof follows the ideas of \cite[Proposition 2]{arlot2014analysis} which relies crucially on Proposition 1 in \cite{arlot2008}. For completeness and clarity, a proof of this lemma appears below.

\begin{proof}
We first condition on $\mathcal{P}(\lambda)$ and compute the variance of the tree estimator corresponding to a fixed tessellation. Note that the assumption $\mathcal{D}_n$ and $\mathcal{P}(\lambda)$ are independent allows us to take these expectations separately. Also, recall that if no points of $\{X_1, \ldots, X_n\}$ fall in $Z_x^{\lambda}$, then $\hat{f}_{\lambda,n}(x) = 0$. For each $C \in \mathcal{P}(\lambda)$, let $\mathcal{N}_n(C) = \sum_{i=1}^n 1_{\{X_i \in C\}}$ be the number of covariates inside $C$ and let $p_{\lambda,C} := \PP_X(X \in C)$. Then,
\begin{align*}
    &\EE_{\mathcal{D}_n,X}\left[(\bar{f}_{\lambda}(X) - \hat{f}_{\lambda,n}(X))^2\right] \\
    &= \int_{\RR^d} \sum_{C \in \mathcal{P}(\lambda)} 1_{\{x \in C\}} \EE_{\mathcal{D}_n} \left[ \left(\EE_X[f(X)| X \in C] - \frac{\sum_{i=1}^n Y_i1_{\{X_i \in C\}}}{\mathcal{N}_n(C)}\right)^2\right] \dint \mu(x) \\
     &= \sum_{C \in \mathcal{P}(\lambda): C \cap \mathrm{supp}(\mu) \neq \emptyset} p_{\lambda, C} \EE_{\mathcal{D}_n}\left[\left(\EE_X[f(X)| X \in C] - \frac{\sum_{i=1}^n Y_i1_{\{X_i \in C\}}}{\mathcal{N}_n(C)}\right)^2\right].
\end{align*}
The expectation in the sum satisfies
\begin{align*}
&\EE_{\mathcal{D}_n}\left[\left(\EE_X[f(X)| X \in C] - \frac{\sum_{i=1}^n Y_i1_{\{X_i \in C\}}}{\mathcal{N}_n(C)}\right)^2\right] \\
    &= \sum_{k=1}^n\PP(\mathcal{N}_n(C) = k) \EE_{\mathcal{D}_n}\left[\left(\EE_X[f(X)| X \in C] - \frac{\sum_{i=1}^n Y_i1_{\{X_i \in C\}}}{k} \right)^2\bigg| \mathcal{N}_n(C) = k\right] \\
    & \qquad + \PP(\mathcal{N}_n(C) = 0) \EE_X[f(X)| X \in C]^2.
\end{align*}
By the assumptions on the noise, the conditional expectation in the sum satisfies 
\begin{align*}
&\EE_{\mathcal{D}_n}\left[\left(\EE_X[f(X)| X \in C] - \frac{\sum_{i=1}^n Y_i1_{\{X_i \in C\}}}{k}\right)^2 \bigg| \, \mathcal{N}_n(C) = k\right] \\
&=  k^{-2}\EE_{\mathcal{D}_n}\left[\left(k\EE_X[f(X)| X \in C] - \sum_{i=1}^n (f(X_i) + \ee_i)1_{\{X_i \in C\}} \right)^2 \bigg|\, \mathcal{N}_n(C) = k\right]\\
&=  k^{-2}\sum_{i_1< \cdots < i_k}\PP(X_{i_1}, \ldots, X_{i_k} \in C | \, \mathcal{N}_n(C) = k)\\
& \cdot \EE_{\mathcal{D}_n}\left[\left( k\EE_X[f(X)| X \in C] - \sum_{j=1}^kf(X_{i_j}) - \sum_{j=1}^k\ee_{i_j}\right)^2 \bigg| \, \mathcal{N}_n(C) = k, X_{i_1}, \ldots, X_{i_k} \in C\right] \\
&=  k^{-2}\EE_{\mathcal{D}_n}\left[\left(k\EE_X[f(X)| X \in C] - \sum_{i=1}^kf(X_{i}) - \sum_{i=1}^k\ee_{i}\right)^2 \bigg|\, X_{1}, \ldots, X_{k} \in C\right] \\
&= k^{-2}\EE_{\mathcal{D}_n}\left[\left(k\EE_X[f(X)|\, X \in C] - \sum_{i=1}^k f(X_{i})\right)^2 \bigg| \, X_1, \ldots, X_k \in C\right] + k^{-1}\sigma^2,
\end{align*}
and by the independence of the $X_i$'s, the expectation in the first term simplifies to
\begin{align*}
    &\EE_{\mathcal{D}_n}\left[\left(k\EE_X[f(X)| X \in C] - \sum_{i=1}^kf(X_{i})\right)^2\bigg| X_1, \ldots, X_k \in C\right] \\
    &= k^2\EE_X[f(X)| X \in C]^2 - 2k^2\EE_X[f(X)| X \in C]^2  \\
    &\quad \quad + \EE_{\mathcal{D}_n}\left[\sum_{i,j=1}^k f(X_i)f(X_j) \bigg| \, X_1, \ldots, X_k \in C\right] \\
    &= k\EE_{X}[f(X)^2 | X \in C] + (k^2 - k)\EE_X[f(X)| X \in C]^2 - k^2\EE_X[f(X)| X \in C]^2 \\
    &= k(\EE_{X}[f(X)^2 | X \in C] - \EE_X[f(X)| X \in C]^2).
\end{align*}
Thus,
\begin{align*}
&\EE_{\mathcal{D}_n}\left[\left(\EE_X[f(X)| X \in C] - \frac{\sum_{i=1}^n Y_i1_{\{X_i \in C\}}}{\mathcal{N}_n(C)}\right)^2\right] \\
&=  \sum_{k=1}^n\PP(\mathcal{N}_n(C) = k) k^{-1}\left(\EE_{X}[f(X)^2 | X \in C] - \EE_X[f(X)| X \in C]^2 + \sigma^2\right) \\
    & \qquad + \PP(\mathcal{N}_n(C) = 0) \EE_X[f(X)| X \in C]^2 \\
    &=\left(\EE_{X}[f(X)^2 | X \in C] - \EE_X[f(X)| X \in C]^2 + \sigma^2\right)\sum_{k=1}^n  \binom{n}{k} p_{\lambda, C}^{k} (1 - p_{\lambda, C})^{n-k} k^{-1}\\
    & \qquad + \EE_X[f(X)| X \in C]^2(1 - p_{\lambda,C})^n \\
    &\leq \left(2\|f\|_{\infty}^2 + \sigma^2 \right) \sum_{k=1}^n  \binom{n}{k} p_{\lambda, C}^{k} (1 - p_{\lambda, C})^{n-k} k^{-1} + \|f\|_{\infty}^2(1 - p_{\lambda,C})^n.
\end{align*}
Now, note that for $B \sim \mathrm{Binomial}(n, p_{\lambda, C})$,
\begin{align*}
 \sum_{k=1}^n \binom{n}{k} np_{\lambda, C}^{k+1} (1 - p_{\lambda, C})^{n-k} k^{-1} &= \EE[B] \EE[B^{-1}1_{\{B > 0\}}],
\end{align*}
and $\EE[B] \EE[B^{-1}1_{\{B > 0\}}] \leq \frac{2n p_{\lambda,C}}{(n+1)p_{\lambda,C}} \leq 2$ \cite[Lemma 4.1]{Gyorfi}. Also, the upper bounds $1 - x \leq e^{-x}$ and $xe^{-x} \leq e^{-1}$ for all $x \geq 0$ imply
\begin{align*}
  np_{\lambda, C}(1 - p_{\lambda,C})^n \leq  e^{-1} \leq 1. 
\end{align*}
Thus,
\begin{align*}
    \EE_{\mathcal{D}_n,X}\left[(\bar{f}_{\lambda}(X) - \hat{f}_{\lambda,n}(X))^2\right] &\leq \frac{1}{n} \sum_{\substack{C \in \mathcal{P}(\lambda): \\ C \cap \mathrm{supp}(\mu) \neq \emptyset}}  \left(2\|f\|_{\infty}^2 + \sigma^2 \right) \sum_{k=1}^n  \binom{n}{k} np_{\lambda, C}^{k+1} (1 - p_{\lambda, C})^{n-k} k^{-1} \\
    & \qquad \qquad + \frac{\|f\|_{\infty}^2}{n} \sum_{\substack{C \in \mathcal{P}(\lambda):\\ C \cap \mathrm{supp}(\mu) \neq \emptyset}} np_{\lambda, C}(1 - p_{\lambda,C})^n \\
    &\leq \frac{5\|f\|_{\infty}^2 + 2\sigma^2}{n} N_{\lambda}(\mathrm{supp}(\mu)) .
\end{align*}
Taking the expectation with respect to $\mathcal{P}(\lambda)$ completes the proof.
\end{proof}

\subsection{Proof of Theorem \ref{t:rate1}}

Following the proof of Theorem 2 by \cite{mourtada2020minimax}, we first use Jensen's inequality to reduce the risk of $\hat{f}_{\lambda, n, M}$ to that of a single Mondrian tree estimator $\hat{f}_{\lambda,n} := \hat{f}_{\lambda, n, 1}$. Using the bias-variance decomposition \eqref{e:tree_decomp}, the risk of $\hat{f}_{\lambda,n}$ is given by
\begin{align}
    R(\hat{f}_{\lambda,n}) =  \EE[(f(X) - \hat{f}_{\lambda,n}(X))^2] = \EE[(f(X) - \bar{f}_{\lambda}(X))^2] + \EE[(\bar{f}(X) - \hat{f}_{\lambda,n}(X))^2].
\end{align}

We first consider the bias term. For $x \in \mathrm{supp}(\mu)$, by the assumption on $f$,
\begin{align*}
    |f(x) - \bar{f}_{\lambda}(x)| &\leq \frac{1}{\mu(Z^{\lambda}_x)} \int_{Z^{\lambda}_x} \left|f(x) - f(z)\right| \mu(\dint z) \\
    &\leq \frac{1}{\mu(Z^{\lambda}_x)} \int_{Z^{\lambda}_x} L\|x- z\|^{\beta} \mu(\dint z)  \leq L \mathrm{diam}(Z^{\lambda}_x \cap \mathrm{supp}(\mu))^{\beta}.
\end{align*}
Recall that the assumption $X$ has intrinsic dimension $d$ means there exists a $d$-dimensional linear subspace $\mathcal{S}\subseteq \RR^d$ such that $\mathrm{supp}(\mu) \subset \mathcal{S}$. Then by Lemma \ref{l:diameter},
\begin{align}\label{e:bias_bnd}
    \EE[(f(X) - \bar{f}_{\lambda}(X))^2] \leq L^2\EE[\mathrm{diam}(Z_x^{\lambda} \cap \Scal)^{2\beta}] \leq \frac{L^2 c_{\beta,P_S\Pi}}{\lambda^{2\beta}h_{\min}(P_{\Scal}\Pi)^{2\beta}}.
\end{align}
 For the variance bound, Lemma \ref{l:variance} implies
\begin{align*}
 \EE[(\bar{f}_{\lambda}(X) - \hat{f}_{\lambda,n}(X))^2] &\leq \frac{5\|f\|_{\infty}^2 + 2\sigma^2}{n} \EE[N_{\lambda}\left(\mathrm{supp}(\mu)\right)] \leq \frac{5\|f\|_{\infty}^2 + 2\sigma^2}{n} \EE[N_{\lambda}\left(W \cap \Scal\right)] .
\end{align*}
Let $W_{\mathcal{S}} := W \cap \mathcal{S}$. 
Finally by Lemma \ref{l:number}, 
\begin{align*}
 \EE[N_{\lambda}\left(W_{\mathcal{S}}\right)] &=  \mathrm{vol}_d(P_{\Scal}\Pi)\sum_{k=0}^d \binom{d}{k} \lambda^k  \EE[V(W_{\Scal}[k], Z_{\Scal}[d-k])]. 
\end{align*}
Then,
\begin{align}\label{e:var_bnd}
    \EE[(\bar{f}_{\lambda}(X) - \hat{f}_{\lambda,n}(X))^2] &\leq \frac{(5\|f\|^2_{\infty} + 2\sigma^2)\mathrm{vol}_d(P_{\Scal}\Pi)}{n}  \sum_{k=0}^d  \binom{d}{k} \lambda^k \EE[V(W_{\Scal}[k], Z_{\Scal}[d-k])].
\end{align}
Combining equations \eqref{e:bias_bnd}  and \eqref{e:var_bnd} gives the first claim. To prove \eqref{e:risk_bound_B}, we note that if $\mathrm{diam}(W) \leq 2R$, then $\EE[N_{\lambda}\left(\mathrm{supp}(\mu)\right)] \leq \EE[N_{\lambda}\left(RB^D \cap \mathcal{S}\right)]$, and the bound follows from the same argument used in Example \ref{ex:Nlambda_ex1} to obtain \eqref{e:Nlambda_Ball}.

Finally, letting $\lambda = \lambda_n \sim L^{2/(d+2\beta)}n^{1/(d+2\beta)}$ as $n \to \infty$ proves Corollary \ref{cor:rate1}. 

\subsection{Proof of Theorem \ref{t:rate2}}
We first need the following technical lemma.
\begin{lemma}\label{l:int_zero}
Let $Z_x^{\lambda}$ be the cell of $\mathcal{P}(\lambda)$ containing the point $x \in \RR^D$. Assume $x \in \mathcal{S} \subseteq \RR^D$ for a linear subspace $\mathcal{S}$ of dimension $d$. Then,
 \[\int_{\mathcal{S}} (z-x)\EE\left[\frac{1_{\{z \in Z_x^{\lambda}\}}}{\mathrm{vol}_d(Z_x^{\lambda} \cap \mathcal{S})}\right] \dint z = 0\]
\end{lemma}

\begin{proof}
By the stationarity of $\mathcal{P}(\lambda)$ and a change of variable, for $x \in \mathcal{S}$,
\begin{align*}
    \int_{\mathcal{S}} (z-x)\EE\left[\frac{1_{\{z \in Z_x^{\lambda}\}}}{\mathrm{vol}_d(Z_x^{\lambda} \cap \mathcal{S})}\right] \dint z &= \int_{\mathcal{S}} y \EE\left[\frac{1_{\{y \in Z_0^{\lambda}\}}}{\mathrm{vol}_d(Z_0^{\lambda} \cap \mathcal{S})}\right] \dint y.
\end{align*}
Then, for $y \in \Scal$, Fact \ref{f:intersection} and \eqref{e:campbell} imply
\begin{align*}
    \EE\left[\frac{1_{\{y \in Z_0^{\lambda}\}}}{\mathrm{vol}_d(Z_0^{\lambda} \cap \mathcal{S})}\right] = \EE\left[\frac{1_{\{y \in Z_0^{\lambda} \cap \Scal\}}}{\mathrm{vol}_d(Z_0^{\lambda} \cap \mathcal{S})}\right] &= \frac{1}{\EE[\mathrm{vol}_d(Z^{\lambda}_{\Scal})]}\EE\left[\frac{\mathrm{vol}_d(Z^{\lambda}_{\Scal} \cap Z^{\lambda}_{\Scal} - y)}{\mathrm{vol}_d(Z^{\lambda}_{\Scal})}\right],
\end{align*}
where $Z_{\Scal}^{\lambda}$ is the typical cell of $\mathcal{P}(\lambda) \cap \Scal$.
By the fact that volume is translation invariant,
\begin{align*}
 \EE\left[\frac{\mathrm{vol}_d(Z^{\lambda}_{\Scal} \cap Z^{\lambda}_{\Scal} + y)}{\mathrm{vol}_d(Z^{\lambda}_{\Scal} \cap \mathcal{S})}\right]  = \EE\left[\frac{\mathrm{vol}_d(Z^{\lambda}_{\Scal} - y \cap Z^{\lambda}_{\Scal})}{\mathrm{vol}_d(Z_{\Scal}^{\lambda})}\right].
\end{align*}
Thus the integrand $y \EE\left[\frac{1_{\{y \in Z_0^{\lambda}\}}}{\mathrm{vol}_d(Z_0^{\lambda} \cap \mathcal{S})}\right]$ is an odd function, and the integral is zero.
\end{proof}

\begin{proof}(of Theorem \ref{t:rate2})
Similarly to the proof of Theorem 3 by \cite{mourtada2020minimax}, define for each $m$ and $x \in \Scal$, 
\[\bar{f}^{(m)}_{\lambda}(x) := \EE_X[f(X)|X \in Z^{\lambda,(m)}_x],\]
and let $\bar{f}_{\lambda,M}(x) = \frac{1}{M}\sum_{m=1}^M \bar{f}^{(m)}_{\lambda}(x)$. Also define 
\[\tilde{f}_{\lambda}(x) := \EE[\bar{f}^{(m)}_{\lambda}(x)] = \EE\left[\frac{1}{\mu(Z_x^{\lambda})}\int_{Z_x^{\lambda}}f(z) \mu(dz)\right]= \int_{\Scal}f(z) \EE\left[\frac{1_{\{z \in Z_x^{\lambda}\}}}{\mu(Z_x^{\lambda})}\right] \mu(dz),\]
where we have used the fact that the support of $\mu$ is contained in a linear subspace $\Scal$.
The bias-variance decomposition for the risk of a tree estimator can be extended to the random forest estimator as follows \cite[Equation (1)]{arlot2014analysis}:
\begin{align}\label{e:bias-var_forests}
     \EE[(\hat{f}_{\lambda,n, M}(X) - f(X))^2] =  \EE[(f(X)- \bar{f}_{\lambda,M}(X))^2] + \EE[(\bar{f}_{\lambda,M}(X) - \hat{f}_{\lambda,n,M}(X) )^2].
\end{align}
This is due to the fact that $\EE[\hat{f}_{\lambda,n, 1}(x)| \mathcal{P}(\lambda)] = \bar{f}_{\lambda,1}(x)$.
Indeed, by the independence of the $X_i$'s,
\begin{align*}
    \EE_{\mathcal{D}_n}[\hat{f}_{\lambda,n, 1}(x)] &= \frac{1}{n}\EE_{\mathcal{D}_n}\left[\frac{\sum_{i=1}^n Y_i1_{\{X_i \in Z_x\}}}{\mathcal{N}_n(Z_x)}\right] \\
    &= \frac{1}{n}\sum_{k=1}^n\binom{n}{k} \PP_{\mathcal{D}_n}(X_1, \ldots, X_k \in Z_x|\mathcal{N}_n(Z_x) = k) \\
    & \qquad \cdot \EE_{\mathcal{D}_n}\left[\frac{\sum_{i=1}^k f(X_i)1_{\{X_i \in Z_x\}}}{k} \bigg|X_1, \ldots, X_k \in Z_x, \mathcal{N}_n(Z_x) = k \right] \\
    &= \EE_{X}\left[f(X) | X \in Z_x \right] =  \bar{f}_{\lambda, n, 1}(x).
\end{align*}
For the bias term in \eqref{e:bias-var_forests}, Proposition 1 of \cite{arlot2014analysis} implies
\begin{align*}
  \EE[(f(x)- \bar{f}_{\lambda,M}(x) )^2] = \EE[(f(x) - \tilde{f}_{\lambda}(x))^2] + \frac{\mathrm{Var}(\bar{f}^{(1)}_{\lambda}(x))}{M}.  
\end{align*}
We then have the following upper bound on the variance of $\bar{f}^{(1)}_{\lambda}$: for $x \in \Scal$,
\begin{align*}
 \mathrm{Var}(\bar{f}^{(1)}_{\lambda}(x)) \leq \EE\left[(\bar{f}^{(1)}_{\lambda}(x) - f(x))^2\right] \leq L^2\EE[\mathrm{diam}(Z_x^{\lambda} \cap \Scal)^2] \leq \frac{L^2c_{2,P_{\Scal}\Pi}}{\lambda^2},
\end{align*}
where the last inequality follows from Lemma \ref{l:diameter} and stationarity. 
For the variance term in \eqref{e:bias-var_forests}, Jensen's inequality implies
\begin{align*}
  \EE[(\bar{f}_{\lambda,M}(x) - \hat{f}_{\lambda,n,M}(x))^2] \leq \EE[(\bar{f}_{\lambda}^{(1)}(x) - \hat{f}_{\lambda,n,1}(x))^2].
\end{align*}
Thus, taking the expectation with respect to $X$,
\begin{align*}
    \EE[(\hat{f}_{\lambda,n, M}(X) - f(X))^2] \leq \frac{L^2c_{2,P_{\Scal}\Pi}}{M\lambda^2} + \EE[(f(X) - \tilde{f}_{\lambda}(X))^2] +  \EE[(\bar{f}_{\lambda}^{(1)}(X) - \hat{f}_{\lambda,n,1}(X) )^2].
\end{align*}
This upper bound also holds when conditioning on $X \in K_{\varepsilon}$:
\begin{align*}
&\EE[(\hat{f}_{\lambda,n, M}(X) - f(X))^2| X \in K_{\ee}] \leq \frac{L^2c_{2,P_{\Scal}\Pi}}{M\lambda^2}   \\
& \qquad + \EE[(f(X) - \tilde{f}_{\lambda}(X))^2| X \in K_{\ee}] + \EE[(\bar{f}_{\lambda}^{(1)}(X) - \hat{f}_{\lambda,n,1}(X) )^2 | X \in K_{\ee}].
\end{align*}
We can use Lemmas \ref{l:variance} and \ref{l:number} to bound the variance as before:
\begin{align*}
&\EE[(\bar{f}_{\lambda}^{(1)}(X) - \hat{f}_{\lambda,n,1}(X) )^2] \\
&\qquad \leq \frac{(5\|f\|^2_{\infty} + 2\sigma^2)\mathrm{vol}_d(P_{\Scal}\Pi)}{n}  \sum_{k=0}^d  \binom{d}{k} \lambda^k \EE[V(W_{\Scal}[k], Z_{\Scal}[d-k])],
\end{align*}
and the conditional variance satisfies
\begin{align}\label{e:cond_var2}
  &\EE[(\bar{f}_{\lambda}^{(1)}(X) - \hat{f}_{\lambda,n,1}(X) )^2| X \in K_{\ee}] \leq \PP(X \in K_{\ee})^{-1} \EE[(\bar{f}_{\lambda}^{(1)}(X) - \hat{f}_{\lambda,n,1}(X) )^2] \nonumber \\
  & \qquad \leq \frac{(5\|f\|^2_{\infty} + 2\sigma^2)\mathrm{vol}_d(P_{\Scal}\Pi)}{n \PP(X \in K_{\ee})}  \sum_{k=0}^d  \binom{d}{k} \lambda^k \EE[V(W_{\Scal}[k], Z_{\Scal}[d-k])].
\end{align}
It remains to control the remaining bias term. By Taylor's theorem, for $f \in \mathcal{C}^{1,\beta}(L)$ with $\beta \in (0,1]$,
\begin{align*}
    |f(z) - f(x) - \nabla f(x)^T(z-x)| &= \left| \int_0^1 [\nabla f(x + t(z-x)) - \nabla f(x)]^T(z-x)\dint t\right| \\
    &\leq \int_0^1 L(t\|z-x\|)^{\beta}\|z-x\| \dint t \leq L\|z-x\|^{1 + \beta}.
\end{align*}
Then, for $x \in \mathrm{supp}(\mu)$,
\begin{align*}
    &|\tilde{f}_{\lambda}(x) - f(x)| = \left| \EE\left[\frac{1}{\mu(Z_x^{\lambda})}\int_{Z_x^{\lambda}} (f(z) - f(x)) \mu(\dint z) \right]\right| \\
    &\leq \left|\EE\left[ \frac{1}{\mu(Z_x^{\lambda})}\int_{Z_x^{\lambda}} \nabla f(x)^T(z - x) \mu(\dint z)\right]\right| \\
    & \quad \quad + \EE\left[ \frac{1}{\mu(Z_x^{\lambda})}\int_{Z_x^{\lambda}} \left|f(z) - f(x) - \nabla f(x)^T(z - x) \right| \mu(\dint z)\right] \\
    &\leq \left|\nabla f(x)^T \int_{\Scal} (z - x) \EE\left[\frac{1_{\{z \in Z_x^{\lambda}\}}}{\mu(Z_x^{\lambda})}\right] \mu(\dint z)\right|  +  \EE\left[\frac{1}{\mu(Z_x^{\lambda})}\int_{\Scal} L\|z-x\|^{1 + \beta}1_{\{z \in Z_x^{\lambda}\}} \mu(\dint z)\right] \\
    &\leq \|\nabla f(x)\| \left\|\int_{\Scal} (z - x) \EE\left[\frac{1_{\{z \in Z_x^{\lambda}\}}}{\mu(Z_x^{\lambda})}\right] \mu(\dint z)\right\| +  L\EE\left[\mathrm{diam}(Z_x^{\lambda} \cap \Scal)^{1 + \beta}\right] \\
    &\leq L\left\| \int_{\Scal} (z - x) \EE\left[\frac{1_{\{z \in Z_x^{\lambda}\}}}{\mu(Z_x^{\lambda})}\right]\mu(\dint z)\right\| +  \frac{Lc_{\beta,P_{\Scal}\Pi}}{\lambda^{1 + \beta}}.
\end{align*}
Up to this point, we have closely followed the proof of Theorem 3 of \cite{mourtada2020minimax} with more general bounds for the parameters of STIT tessellations. For the next step, recall that by the assumptions, $\mu$ has a positive and Lipschitz density $p$ w.r.t. the Lebesgue measure on its support. 
To bound the first term above, the proof of Theorem 3 in \cite{mourtada2020minimax} compares the density $F_{\lambda,p}(z) := \EE\left[\frac{p(z)}{\mu(Z_x^{\lambda})}1_{\{z \in Z_x^{\lambda}\}}\right]$ with the density $F_{\lambda, \text{unif}}(z) := \EE\left[\frac{1_{\{z \in Z_x^{\lambda} \cap [0,1]^D\}}}{\mathrm{vol}_D(Z_x^{\lambda}\cap[0,1]^D)}\right]$ (where $p$ is the uniform density on the unit cube). 
We instead compare $F_{\lambda, p}$ with the density 
\[ F_{\lambda}(z) := \EE\left[\frac{1_{\{z \in Z_x^{\lambda}\}}}{\mathrm{vol}_d\left(Z_x^{\lambda} \cap  \mathcal{S}\right)}\right], \quad z \in \mathcal{S}.\]
By Lemma \ref{l:int_zero} we obtain the following upper bound on the first term above: 
\begin{align*}
    \left\| \int_{\mathcal{S}} (z - x)  F_{\lambda, p}(z) \dint z\right\| & = \left\|\int_{\mathcal{S}} (z-x)\left(F_{\lambda,p}(z) - F_{\lambda}(z)\right)\dint z \right\| \\
    &\leq 
    \int_{\mathcal{S}} \|z - x\| \left|\EE\left[\frac{p(z)1_{\{z \in Z_x^{\lambda}\}}}{\mu(Z_x^{\lambda})}\right] -  \EE\left[\frac{1_{\{z \in Z_x^{\lambda}\}}}{\mathrm{vol}_d(Z_x^{\lambda} \cap \mathcal{S})}\right] \right| \dint z \\
    &\leq \int_{\mathcal{S}} \|z - x\| \EE\left[\frac{\int_{Z_x^{\lambda} \cap \Scal}|p(z) - p(y)|\dint y}{\mu(Z_x^{\lambda})\mathrm{vol}_d(Z_x^{\lambda} \cap \mathcal{S})}1_{\{z \in Z_x^{\lambda} \}}\right]\dint z \\
    &\leq \EE\left[\mathrm{diam}(Z_x^{\lambda} \cap \mathcal{S})\frac{\int_{Z_x^{\lambda} \cap \mathcal{S}} \int_{Z_x^{\lambda} \cap \Scal}  |p(z) - p(y)|\dint y\dint z}{\mu(Z_x^{\lambda})\mathrm{vol}_d(Z_x^{\lambda} \cap \mathcal{S})}\right]. 
\end{align*}
Recall from the assumptions that the density $p$ of $\mu$ has a finite Lipschitz constant $C_p > 0$  on its compact and convex $d$-dimensional support $K := \mathrm{supp}(\mu) \subset \Scal$ and we can define $p_0 := \min_{x \in K} p(x) > 0$ and $p_1 := \max_{x \in K} p(x) < \infty$. Also note that the integrand above is zero when $z,y \notin K$. In the following we denote by $K^c := \RR^d \backslash K$ the complement of $K$. Then,
\begin{align*}
    &\left\| \int_{\mathcal{S}} (z - x)  F_{\lambda, p}(z) \dint z\right\| 
    \leq \EE\left[\mathrm{diam}(Z_x^{\lambda} \cap \mathcal{S})\frac{\int_{Z_x^{\lambda} \cap \mathcal{S}} \int_{Z_x^{\lambda} \cap \Scal}  |p(z) - p(y)|\dint y\dint z}{\mu(Z_x^{\lambda})\mathrm{vol}_d(Z_x^{\lambda} \cap \mathcal{S})}\right] \\
    & \qquad \leq \frac{C_p}{p_0}\EE\left[\mathrm{diam}(Z_x^{\lambda} \cap \mathcal{S})\frac{\int_{Z_x^{\lambda} \cap K} \int_{Z_x^{\lambda} \cap K}  \|z - y\|\dint y\dint z}{\mathrm{vol}_d(Z_x^{\lambda} \cap K)\mathrm{vol}_d(Z_x^{\lambda} \cap \mathcal{S})}\right] \\
    & \qquad \qquad + \frac{2C_pp_1}{p_0}\EE\left[\mathrm{diam}(Z_x^{\lambda} \cap \mathcal{S})\frac{\int_{Z_x^{\lambda} \cap K} \int_{Z_x^{\lambda} \cap \Scal \cap K^c}\dint y\dint z}{\mathrm{vol}_d(Z_x^{\lambda} \cap K)\mathrm{vol}_d(Z_x^{\lambda} \cap \mathcal{S})}\right] \\
    & \qquad \leq \frac{C_p}{p_0}\EE\left[\mathrm{diam}(Z_x^{\lambda} \cap \mathcal{S})^2\right] + \frac{2C_pp_1}{p_0}\EE\left[\frac{\mathrm{diam}(Z_x^{\lambda} \cap \mathcal{S})\mathrm{vol}_d\left(Z_x^{\lambda} \cap \Scal \cap K^c\right)}{\mathrm{vol}_d(Z_x^{\lambda} \cap \mathcal{S})}\right] \\
    & \qquad \leq \frac{C_p c_{2, P_{\Scal}\Pi}}{\lambda^{2}p_0} + \frac{2C_pp_1}{p_0}\EE\left[\frac{\mathrm{diam}(Z_x^{\lambda} \cap \mathcal{S})\mathrm{vol}_d\left(Z_x^{\lambda} \cap 
    \Scal \cap K^c\right)}{\mathrm{vol}_d(Z_x^{\lambda} \cap \mathcal{S})}\right],
\end{align*} 
where the last inequality follows from Lemma \ref{l:diameter} and stationarity. 
Now, using stationarity, Fact \ref{f:intersection}, and \eqref{e:zerocell_scaled},
\begin{align*}
\EE\left[\frac{\mathrm{diam}(Z_x^{\lambda} \cap \Scal )\mathrm{vol}_d(Z_x^{\lambda} \cap \Scal \cap K^c)}{\mathrm{vol}_d(Z_x^{\lambda}\cap \Scal)}\right] = \EE\left[\frac{\mathrm{diam}(Z_{0,\Scal})\mathrm{vol}_d(Z_{0, \Scal} \cap \lambda(K^c - x))}{\lambda\mathrm{vol}_d(Z_{0, \Scal})}\right].   
\end{align*}
Thus we have the following upper bound on the conditional bias: 
\begin{align}\label{e:cond_bnd1}
    &\EE[(\tilde{f}_{\lambda}(X) - f(X))^2| X \in K_{\ee}] \leq L^2\EE\left[\left(\left\|\int_{\Scal}(z-X)F_{\lambda,p}(z) \dint z\right\| + \frac{c_{\beta, P_{\Scal}\Pi}}{\lambda^{1 + \beta}}\right)^2 \bigg| X \in K_{\ee}\right] \nonumber \\
    &\leq L^2\EE\left[\left(\frac{c_{\beta, P_{\Scal}\Pi}}{\lambda^{1 + \beta}} + \frac{C_p c_{2, P_{\Scal}\Pi}}{\lambda^{2}p_0} + \frac{2C_p p_1}{\lambda p_0}\EE\left[\frac{\mathrm{diam}(Z_{0,\Scal})\mathrm{vol}_d(Z_{0, \Scal} \cap \lambda(K^c - X))}{\mathrm{vol}_d(Z_{0, \Scal})}\right]\right)^2 \bigg| X \in K_{\ee}\right] \nonumber \\
    &\leq L^2\left(\frac{c_{\beta, P_{\Scal}\Pi}}{\lambda^{1 + \beta}} + \frac{C_p c_{2, P_{\Scal}\Pi}}{\lambda^{2}p_0}\right)^2 
  +  \frac{4C^2_p p^2_1}{\lambda^2 p^2_0}\EE\left[\frac{\mathrm{diam}(Z_{0,\Scal})^2\mathrm{vol}_d(Z_{0, \Scal} \cap \lambda(K^c - X))^2}{\mathrm{vol}_d(Z_{0, \Scal})^2} \bigg| X \in K_{\ee}\right] \nonumber \\
  &\qquad  + \frac{4L^2C_p p_1}{\lambda p_0}\left(\frac{c_{\beta, P_{\Scal}\Pi}}{\lambda^{1 + \beta}} + \frac{C_p c_{2, P_{\Scal}\Pi}}{\lambda^{2}p_0}\right)\EE\left[\frac{\mathrm{diam}(Z_{0,\Scal})\mathrm{vol}_d(Z_{0, \Scal} \cap \lambda(K^c - X))}{\mathrm{vol}_d(Z_{0, \Scal})} \bigg| X \in K_{\ee}\right].
\end{align}
Conditioned on $X \in K_{\ee}$, $\ee B^d \subseteq K - X$. This implies that $\mathrm{vol}_d(Z_{0, \Scal} \cap \lambda(K^c - X)) = 0$ if $\mathrm{diam}(Z_{0, \Scal}) \leq \lambda \ee$. Then, for $k \in \{1, 2\}$,
 \begin{align*}
    &\EE\left[\frac{\mathrm{diam}(Z_{0,\Scal})^k\mathrm{vol}_d(Z_{0, \Scal} \cap \lambda(K^c - X))^k}{\mathrm{vol}_d(Z_{0, \Scal})^k} \bigg| X \in K_{\ee}\right] \\
    &\qquad \qquad \leq \EE\left[\frac{\mathrm{diam}(Z_{0,\Scal})^k1_{\{\mathrm{diam}(Z_{0, \Scal}) \geq \lambda \ee\}}}{\mathrm{vol}_d(Z_{0, \Scal})^2\PP(X \in K_{\ee})} \EE_X\left[\mathrm{vol}_d(Z_{0, \Scal} \cap \lambda(K^c - X))^k\right] \right].
\end{align*}
To bound the inner expectation with respect to $X$, we see that 
\begin{align*}
&\EE_X[\mathrm{vol}_d(Z_{0, \Scal} \cap \lambda(K^c - X))^k] = \int_K p(x) \left(\int_{\RR^d} 1_{\{y \in Z_{0, \Scal} \cap \lambda(K^c - x)\}}\right)^k \dint x \\
& \qquad \leq p_1 \mathrm{vol}_d(Z_{0, \Scal})^{k-1} \int_K \int_{\RR^d} 1_{\{y \in Z_{0, \Scal} \cap \lambda(K^c - x)\}} \dint y \dint x \\
& \qquad =  p_1 \mathrm{vol}_d(Z_{0, \Scal})^{k-1}  \int_{Z_{0, \Scal}} \int_{K} 1_{\{x \in K^c - \frac{y}{\lambda}\}} \dint x \dint y  \\
&\qquad = p_1 \mathrm{vol}_d(Z_{0, \Scal})^{k-1} \int_{Z_{0, \Scal}} \mathrm{vol}_d\left(K \cap K^c - \frac{y}{\lambda}\right) \dint y \\
&\qquad = p_1 \mathrm{vol}_d(Z_{0, \Scal})^{k-1} \int_{Z_{0,\Scal}}\mathrm{vol}_d\left( K\cup K - \frac{y}{\lambda}\right) \dint y - p_1 \mathrm{vol}_d(Z_{0, \Scal})^k \mathrm{vol}_d(K),
\end{align*}
where we have used that $\mathrm{vol}_d(K \cap K^c - y/\lambda) = \mathrm{vol}_d(K) - \mathrm{vol}_d(K \cap K - y/\lambda)$ and $\mathrm{vol}_d(K \cap K - y/\lambda) = 2 \mathrm{vol}_d(K) - \mathrm{vol}_d(K \cup K - y/\lambda)$.
We now observe that the union $K \cup K - \frac{y}{\lambda}$ is a subset of the Minkowski sum $K + \frac{\|y\|}{\lambda}B^d$. By Steiner's formula \cite[Equation (14.5)]{weil}, 
\begin{align*}
\mathrm{vol}_d\left( K\cup K - \frac{y}{\lambda}\right) &\leq \mathrm{vol}_d\left(K + \frac{\|y\|}{\lambda}B^d\right) = \sum_{j=0}^d \left(\frac{\|y\|}{\lambda}\right)^{d-j}\kappa_{d-j}V_j(K) \\
&= \mathrm{vol}_d(K) + \sum_{j=0}^{d-1} \left(\frac{\|y\|}{\lambda}\right)^{d-j}\kappa_{d-j}V_j(K).
\end{align*}
Thus, 
\begin{align*}
\EE_X[\mathrm{vol}_d(Z_{0, \Scal} \cap \lambda(K^c - X))^k] 
&\leq p_1 \mathrm{vol}_d(Z_{0, \Scal})^k \sum_{j=0}^{d-1} \left(\frac{\mathrm{diam}(Z_{0, \Scal})}{\lambda}\right)^{d-j}\kappa_{d-j} V_j(K) \\
&= 2\lambda^{-1} p_1 \mathrm{vol}_d(Z_{0, \Scal})^k \mathrm{diam}(Z_{0, \Scal})V_{d-1}(K) + O(\lambda^{-2}).
\end{align*}
Putting the above bounds together gives
 \begin{align}
    &\EE\left[\frac{\mathrm{diam}(Z_{0,\Scal})^k\mathrm{vol}_d(Z_{0, \Scal} \cap \lambda(K^c - X))^k}{\mathrm{vol}_d(Z_{0, \Scal})^k} \bigg| X \in K_{\ee}\right] \nonumber \\
    &\qquad \qquad \leq \left(\frac{p_1V_{d-1}(K)}{\lambda \PP(X \in K_{\ee})} +O(\lambda^{-2})\right) \EE\left[\mathrm{diam}(Z_{0,\Scal})^{k+1}1_{\{\mathrm{diam}(Z_{0, \Scal}) \geq \lambda \ee\}} \right].
\end{align}
By Holder's inequality, Lemma \ref{l:diameter} and \eqref{e:diamtail},
\begin{align*}
\EE\left[\mathrm{diam}(Z_{0,\Scal})^{k+1}1_{\{\mathrm{diam}(Z_{0, \Scal}) \geq \lambda \ee\}}\right] &\leq \EE\left[\mathrm{diam}(Z_{0,\Scal})^{2k + 2}\right]^{1/2} \PP\left(\mathrm{diam}(Z_{0, \Scal}) \geq \lambda \ee\right)^{1/2} \\
& \leq  c_{k, \ee, P_{\Scal}\Pi} e^{ - \lambda \ee c_{P_{\Scal}\Pi}}.
\end{align*}
Combining the above bounds implies
\begin{align*}\label{e:cond_bnd1}
    &\EE[(\tilde{f}_{\lambda}(X) - f(X))^2| X \in K_{\ee}] \\
    &\leq L^2\left(\frac{c_{\beta, P_{\Scal}\Pi}}{\lambda^{1 + \beta}} + \frac{C_p c_{2, P_{\Scal}\Pi}}{\lambda^{2}p_0}\right)^2 
  +  \frac{4C^2_p p^3_1V_{d-1}(K)}{\lambda^3 p^2_0 \PP(X \in K_{\ee})}c_{2, \ee, P_{\Scal}\Pi} e^{ - \lambda \ee c_{P_{\Scal}\Pi}}\\
  &\qquad  + \frac{4L^2C_p p^2_1}{\lambda^2 p_0}\left(\frac{c_{\beta, P_{\Scal}\Pi}}{\lambda^{1 + \beta}} + \frac{C_p c_{2, P_{\Scal}\Pi}}{\lambda^{2}p_0}\right)\frac{V_{d-1}(K)}{\PP(X \in K_{\ee})}c_{1, \ee, P_{\Scal}\Pi} e^{ - \lambda \ee c_{P_{\Scal}\Pi}} + O(\lambda^{-4}e^{ - \lambda \ee c_{P_{\Scal}\Pi}}).
\end{align*}
Next observe that $\PP(X \in K_{\ee}) \geq \frac{p_0\mathrm{vol}_d(K_{\ee})}{\mathrm{vol}_d(K)} > 0$.  The total conditional risk then satisfies
\begin{align*}
&\EE[(\hat{f}_{\lambda,n, M}(X) - f(X))^2| X \in K_{\ee}] \leq \frac{L^2c_{2,P_{\Scal}\Pi}}{M\lambda^2} + L^2\left(\frac{c_{\beta, P_{\Scal}\Pi}}{\lambda^{1 + \beta}} + \frac{C_p c_{2, P_{\Scal}\Pi}}{\lambda^{2}p_0}\right)^2 \\
  & +  \frac{4C^2_p p^3_1V_{d-1}(K)\mathrm{vol}_d(K)}{\lambda^3 p^2_0 p_0\mathrm{vol}_d(K_{\ee})}c_{2, \ee, P_{\Scal}\Pi} e^{ - \lambda \ee c_{P_{\Scal}\Pi}}\\
  & + \frac{4L^2C_p p^2_1}{\lambda^2 p_0}\left(\frac{c_{\beta, P_{\Scal}\Pi}}{\lambda^{1 + \beta}} + \frac{C_p c_{2, P_{\Scal}\Pi}}{\lambda^{2}p_0}\right)\frac{V_{d-1}(K)\mathrm{vol}_d(K)}{p_0\mathrm{vol}_d(K_{\ee})}c_{1, \ee, P_{\Scal}\Pi} e^{ - \lambda \ee c_{P_{\Scal}\Pi}} + O\left(\lambda^{-4}e^{ - \lambda \ee c_{P_{\Scal}\Pi}}\right) \\
  &  + \frac{(5\|f\|^2_{\infty} + 2\sigma^2)\mathrm{vol}_d(P_{\Scal}\Pi)\mathrm{vol}_d(K)}{n p_0 \mathrm{vol}_d(K_{\ee})}  \sum_{k=0}^d  \binom{d}{k} \lambda^k \EE[V(W_{\Scal}[k], Z_{\Scal}[d-k])].
\end{align*}
Extracting leading order terms, we have
\begin{align*}
\EE[\hat{f}_{\lambda,n, M}(X) - f(X))^2 | X \in K_{\ee}] \leq O\left( \frac{L^2}{\lambda^2M} + \frac{L^2}{\lambda^{2(1+ \beta)}} + \frac{\lambda^d}{n}\right).
\end{align*}
Finally, letting $\lambda = \lambda_n \sim L^{2/(d+2\beta + 2)}n^{1/(d+2\beta+2)}$ and $M = M_n \gtrsim \lambda_n^{2\beta}$ as $n \to \infty$ gives the rate $O\left(L^{2d/(d + 2\beta + 2)}n^{-(2\beta + 2)/(d + 2\beta + 2)}\right)$ in Corollary \ref{cor:rate2}.
In the unconditional case,
\begin{align*}
\EE[\hat{f}_{\lambda,n, M}(X) - f(X))^2] \leq O\left(\frac{L^2}{\lambda^2 M} + \frac{L^2}{\lambda^{\min\{3, 2(1 + \beta)\}}} + \frac{\lambda^d}{n}\right).
\end{align*}
This bound gives the same rate as above when $\beta \leq 1/2$. 
When $\beta > 1/2$, this bound gives the suboptimal rate $O\left(L^{2d/(d+3)}n^{-3/(d+3)}\right)$ by letting $\lambda = \lambda_n \sim L^{2/(d + 3)}n^{1/(d+3)}$ and $M = M_n \gtrsim \lambda_n$ as $n \to \infty$.
\end{proof}

\subsection{Proof of Theorem \ref{t:poisson}}

By Corollary 1 of \cite{Thale2013Poisson}, the typical cell of a STIT tessellation with lifetime parameter $\lambda$ has the same distribution as the typical cell of a Poisson hyperplane tessellation with intensity $\lambda$ and the same normalized associated zonoid, or equivalently, the same directional distribution. The distribution of the typical cell determines the distribution of the zero cell \cite[Theorem 10.4.1]{weil}, and thus the same proof methods used in Theorems \ref{t:rate1} and \ref{t:rate2} can be applied in this setting and the results follow.


\section{Discussion and Future Work}\label{s:discussion}

This work expands and strengthens the theoretical basis for purely random forests, and establishes stochastic geometry as a promising toolkit for analyzing regression and classification algorithms based on random partitions. In particular, we showed that a large class of random forests built from stationary hyperplane partitions with oblique splits all achieve the same minimax optimal rates as Mondrian forests proved in \cite{mourtada2020minimax}. We also extended these rates to depend on a notion of the intrinsic dimension of the input as opposed to the ambient dimension of the feature space. This work motivates many more questions at the intersection of stochastic geometry and machine learning. We outline a few future research directions here. 

First, the definition of low intrinsic dimension used in our assumptions has limited applicability.  However, we hope that our results and proof techniques can form a basis for future work to obtain optimal rates under more general notions of low dimensionality of the input. 
Additionally, as mentioned in Remark \ref{Rem:adaptivity}, to obtain these rates in practice one needs an adaptive way of tuning the lifetime parameter and number of trees, since the intrinsic dimension and regularity are not known {\it a priori}. For adaptation to regularity, a model aggregation method was proposed by \cite{mourtada2020minimax} for Mondrian forests which could potentially be extended to STIT forests.

Another open question is whether the flexibility of the directional distribution allows us to find ``optimal" split directions, or directional distribution $\phi$, for a given data set. For example, a directional distribution that depends on the covariate distribution may improve performance and decrease computational costs by decreasing the complexity of the partition needed to achieve optimal rates. The flexibility of these models also motivates a study of whether, under different assumptions about the underlying function $f$, one can obtain convergence rates that depend on the directional distribution, and show optimal rates are achieved with a good choice of this parameter. In particular, we might expect that with appropriate choice of directional distribution, STIT random forests will adapt to the same types of low dimensional structure that other purely random forest variants have been shown to adapt to under a modification of the split direction probabilities. For example, centered random forests obtain convergence rates that depend on the sparsity level of the regression function described by number of relevant features \cite{Biau2012, Klusowski2021}.

A third research direction concerns using the theory of stochastic geometry to study random forests built from random tessellations where split locations depend on the data set. For example, we may consider STIT or Poisson hyperplane tessellations associated with a non-stationary intensity measure and 
somehow incorporate the given data set into this measure to improve the performance of this class of random forests. In the stochastic geometry literature, some non-stationary random tessellation models have been studied.
Sections 11.3 and 11.4 in \cite{weil} collect results on non-stationary flat processes and Poisson hyperplane tessellations, many of which are due to \cite{Schneider2003}. Also of note is work by \cite{Hoffman2007} who studied a generalization of the associated zonoid for non-stationary Poisson hyperplane tessellations.

\bibliographystyle{plain}
\bibliography{biblio}

\end{document}